\newtheorem{theorem}{Theorem}[section]
\newtheorem{corollary}[theorem]{Corollary}
\newtheorem{proposition}[theorem]{Proposition}
\newtheorem{lemma}[theorem]{Lemma}
\theoremstyle{definition}
\newtheorem{definition}[theorem]{Definition}
\theoremstyle{definition}
\theoremstyle{definition}
\newtheorem{remark}{Remark}
\theoremstyle{definition}
\renewcommand{\epsilon}{\varepsilon}
\title{Reinforcement learning for irreversible reinsurance problems:\\ the randomized singular control approach}
\author{Zongxia Liang\thanks{Department of Mathematical Sciences, and China Center for Insurance and Risk Management of Tsinghua SEM,Tsinghua University, Beijing 100084, People’s Republic of China
  (\url{liangzongxia@tsinghua.edu.cn}).}
  \and  Xiaodong Luo\thanks{Department of Mathematical Sciences, Tsinghua University, Beijing 100084, People’s Republic of China
  (\url{luoxd21@mails.tsinghua.edu.cn}).}
  \and Xiang Yu\thanks{Department of Applied Mathematics, The Hong Kong Polytechnic University, Kowloon, Hong Kong (\url{xiang.yu@polyu.edu.hk})}
}
\numberwithin{equation}{section}
\begin{document}
\date{ }

\maketitle

\vspace{-0.2in}
\begin{abstract}

This paper studies the continuous-time reinforcement learning for stochastic singular control with the application to an infinite-horizon irreversible reinsurance problem. The singular control is equivalently characterized as a pair of regions of time and the augmented states, called the singular control law. To encourage the exploration in the learning procedure, we propose a randomization method by considering an auxiliary singular control and entropy regularization. The exploratory singular control problem is formulated as a two-stage optimal control problem, in which the time-inconsistency issue arises in the outer problem. Existence of equilibrium singular control law for the time-inconsistent outer problem is rigorously established. Taking advantage of the solution structure, we utilize a proper parameterization and neural networks to devise the actor-critic reinforcement learning algorithm. In the numerical experiment, we show the superior convergence of parameter iterations based on the randomized equilibrium policy and illustrate how the exploration may advance the learning performance.

\vskip 10 pt \noindent
{\bf Mathematics Subject Classification:}  93E20, 93B47, 49K45
\vskip 10pt  \noindent
{\bf Keywords:} Irreversible reinsurance problem, singular control law, entropy regularization, time inconsistency, time-consistent equilibrium, continuous-time reinforcement learning
\end{abstract}

\section{Introduction}

Reinforcement Learning (RL), a paradigm of machine learning that allows an agent to learn the optimal decision by interacting with an unknown environment, has received an upsurge of developments in both theories and applications. Traditional RL has achieved remarkable success by modeling the trial-and-error decision-making in discrete time framework. However, this discrete-time assumption is often a simplification of reality. Many real-world systems, particularly in finance, physics, and engineering, evolve continuously through time. Recently, continuous-time reinforcement learning algorithms have attracted more attention and spurred a lot of new and fruitful theoretical advances. A series of pioneer studies such as  \cite{wang2020continuous}, \cite{wang2020reinforcement}, \cite{jia2022policy}, \cite{jia2022policy1}, \cite{jia2023q} lay the foundation for continuous-time exploratory RL algorithms under entropy regularization from policy evaluation, policy gradient to q-learning algorithms. Later, more subsequent extensions of continuous-time RL algorithms under entropy regularization in various financial applications have been considered, see for instance,  \cite{dai2023learning}, \cite{DaiDJZ}, \cite{Jia24}, \cite{BoHuangYu}, \cite{WeiYu}, \cite{wyy2024}, \cite{wgl},
and references therein. 

Notably, the aforementioned studies only focus on RL approach for regular control policies, where the entropy regularization can be defined in a straightforward manner. Recently, the optimal stopping problem has witnessed some progress in the domain of RL approach. For instance, \cite{dong2024randomized} proposed an entropy regularization for the optimal stopping problem by considering the regular control of intensity to devise the RL algorithm. \cite{dai2024learning} considered the penalty approximation that leads to a regular control problem, where the entropy regularization is imposed on the regular control policy. \cite{HLYZ2025} studied the reinforcement learning for optimal switching under entropy regularization, for which the randomized policy is to control the generator matrix of an associated continuous-time Markov chain.
As these studies randomize the stopping time as the regular control, some previous results such as the martingale characterization in \cite{jia2022policy} and \cite{jia2022policy1} can be directly adopted. \cite{dianetti2024exploratory} recently considered the residual entropy on the probability that the agent stops before time $t$ and formulated the randomization of the stopping time as a singular control problem, in which they obtained a policy improvement result in updating the boundary. Later, \cite{dianetti2025entropy} extended this randomization method in solving the mean-field game of optimal stopping.

The existing study of continuous-time RL algorithms for singular control problems is rare. Noting the connection between some types of singular control and optimal stopping problems that has been discussed in some early studies, see \cite{karatzas1984connections}, \cite{karatzas1985connections} and \cite{karoui1991new},  recently \cite{dai2024learning} transformed the optimal singular control problem of optimal investment with transaction cost into a Dynkin game of optimal stopping to learn the optimal strategy using the techniques developed in \cite{dong2024randomized}. \cite{liang2025reinforcement} proposed a reinforcement learning framework for a class of singular control problems without entropy regularization by working with singular control laws and the established policy improvement result for region iterations. How to formulate a proper randomization of the singular control to facilitate the learning exploration is still an interesting open problem.

Building upon the idea of learning the region of singular control laws, this work aims to further consider the exploration and formalize the randomization of singular control laws by introducing another auxiliary singular control law. However, we note that the entropy regularized singular control problem faces the issue of time-inconsistency. The optimal solution of the original problem without regularization turns out to be an equilibrium solution of the entropy regularized problem. In our proposed formulation of entropy regularization, we need to learn the time-consistent equilibrium as a pair of singular control laws. In the specific setup for irreversible reinsurance problem, we are able to establish the existence of equilibrium. Further, we can show the convergence of the equilibrium value function towards the optimal value function of the original problem as the temperature parameter tends to zero, which justifies that the learned solution in the exploratory formulation can effectively approximate the optimal singular control in the original application without entropy.

It is also worth mentioning that the study of time-inconsistent singular control problems is relatively under-explored in the literature, and there are some emerging studies until recently. \cite{liang2024equilibria} introduced the notion of singular control law to define and solve the equilibrium solution of an irreversible reinsurance problem where the value of singular control appears both in the dynamic and in the objective function. \cite{liang2025stackelberg} applied the definition to a time-inconsistent Stackelberg game in reinsurance. \cite{dai2024dynamic} introduces a different notion of equilibrium definition in solving the time-inconsistent portfolio selection under transaction costs. \cite{cao2025equilibrium} exploited a similar definition to solve a new time-inconsistent dividend problem with a mean-variance objective.

The main contributions of the present paper are summarized as follows:

First, we propose a new method for the randomization of singular controls to encourage the exploration in RL. The randomized form involves two stages of choosing singular control laws. First, at each time $t$, the agent decides a singular control that represents the optimal strategy based on the learned knowledge up to the current time. This optimal singular control in the inner problem can be generated by a consistent family of singular control laws. Second, at each time of the outer problem, the agent flips a biased coin to randomly choose to act according to the current optimal singular control from then on, or to wait for a better time in the future. Similar to \cite{dianetti2024exploratory} and \cite{dianetti2025entropy}, we model the decision of the second step by an auxiliary singular control. The overall randomization is equivalent to a switch at a random activation time from the singular control law $(\mathcal{Q},\emptyset)$ that represents a pure waiting strategy, to a non-trivial one.

Second, in the numerical implementation, the convergence issue on the iteration of time-consistent equilibrium policies might cause a trouble as the policy improvement result no longer holds. In general, it might be a challenge to show that the proposed iteration attains a contraction mapping for the fixed point. To avoid such convergence issue, we establish a q-learning theorem to directly learn the equilibrium value function without policy iteration. We design a neural network to parameterize the equilibrium value function customized for the q-learning theorem. In the numerical plots, we not only illustrate the effectiveness of the algorithm via the convergence of parameter iterations but also demonstrate the evident improvement of the learning performance by considering the randomized policy.

Third, we establish the verification theorem to connect the characterization of equilibrium singular control law for the outer problem to an extended HJB variational inequality.  Using the PDE theory for parabolic equations and Schauder's fixed point theorem, we rigorously prove the existence of equilibrium for a non-standard class of time-inconsistent singular control problem where the singular control variable is treated as an extended state variable. Moreover, we show the convergence of equilibrium value functions as entropy regularization vanishes in the context of singular controls, which is also new to the literature.

The rest of the paper is organized as follows. Section \ref{sec:formulation} formulates the original irreversible reinsurance problem and introduces the two-stage entropy regularized problem for randomizing the singular control decision. Section \ref{sec:inner} solves the inner optimal control problem while Section \ref{sec:outer} characterizes the equilibrium for the time-inconsistent outer problem. Section \ref{sec:ACRL} lays the theoretical foundation of policy evaluation, policy iteration and q-learning for our irreversible reinsurance problem.  In Section \ref{sec:num}, we implement the exploratory RL algorithm by using the randomized singular control laws and compare the performance with a benchmark algorithm without randomization. Finally, Section \ref{existence} rigorously establishes the existence of equilibrium for the time-inconsistent outer problem.  

\section{Entropy Regularization for Randomized Singular Control Laws}\label{sec:formulation}

Let $(\Omega,\mathcal{F},\{\mathcal{F}_{t}\}_{t\ge 0},\mathbb{P})$ be a filtered probability space satisfying the usual conditions, on which $B:=\{B_{t}\}_{t\ge 0}$ is a Brownian motion. Let $\{\mathcal{F}^{B}_{t}\}_{t\ge 0}$ be the right-continuous extension of $\{\mathcal{F}_{t}\}_{t\ge 0}$ generated by $B$. Assume that $(\Omega,\mathcal{F},\mathbb{P})$ is large enough to accommodate a uniform random variable $Z\sim Uniform[0,1]$, which is independent of the Brownian motion $B$. We consider the dynamic decision-making of an insurance company. The uncontrolled risk exposure of the company, denoted by $X$, follows the dynamics
\begin{equation*}
\left\{
\begin{array}{l}
dX_{t}=\mu dt+\sigma dB_{t},\ t\in[0,+\infty),\\
X_{0-}=x_{0},
\end{array}
\right.
\end{equation*}
where $x_{0}$ is the initial risk exposure,  the drift $\mu\in\mathbb{R}$ and the volatility $\sigma>0$ are constants but unknown to the decision maker. The insurance company can sign irreversible reinsurance contracts to mitigate the risk exposure. Using a singular control $\xi$ to model the accumulated transferred risk exposure, we consider the controlled risk exposure $X^{\xi}$ satisfying
\begin{equation}
\label{dyna}
\left\{
\begin{array}{l}
dX^{\xi}_{t}=\mu dt+\sigma dB_{t}-d\xi_{t},\ t\in[0,+\infty),\\
X^{\xi}_{0-}=x_{0}.
\end{array}
\right.
\end{equation}
The insurance company has to balance the running cost from high levels of risk exposure and the cost of reinsurance. The objective for the insurance company is to minimize the following functional over all admissible reinsurance strategies of $\xi$, for $(x,t,y)\in\mathcal{Q}:=\mathbb{R}\times[0,+\infty)\times[0,+\infty)$,
\begin{equation}
\label{object}
\mathbb{E}_{x,t,y}\bigg[\int_{t}^{+\infty}e^{-\beta(r-t)}e^{aX^{\xi}_{r}}dr+\int_{t}^{+\infty}e^{-\beta(r-t)}cd\xi_{r}\bigg].
\end{equation}
In the above expression, we consider the exponential running cost with a constant rate $a>0$. The exponential form represents extreme aversion to high levels of risk exposure. The cost for each unit of reinsurance is a known constant $c>0$. The discount rate $\beta>0$ is also known to the decision maker and assumed to be sufficiently large to satisfy $\beta>\mu a+\frac{1}{2}\sigma^{2}a^{2}$. In \cite{liang2025reinforcement}, we develop a framework of reinforcement learning to learn the optimal singular control law without entropy regularization. The current study aims to design an alternative reinforcement learning algorithm by considering randomized singular control laws and a proper entropy regularization. To this end, we introduce an additional decision on top of the singular control law at each time. That is, at each time $r$ with state-time-control triple $(X_{r-},r,\xi_{r-})$, the agent makes the decision in two stages:\\
(i) He chooses a singular control law $\Xi_{r}$ that can be understood as the optimal one based on the information up to time $r$.\\
(ii) Additionally, he flips a biased coin to decide whether to activate and act according to $\Xi_{r}$ or remain inactive to wait for future information.

\begin{remark}
The idea of introducing an additional binary-choice decision is largely inspired by recent studies \cite{dong2024randomized}, \cite{dianetti2024exploratory}, \cite{dianetti2025entropy} on the randomization of stopping time in reinforcement learning. The intuition is that decisions in both stopping problems and singular control problems are irreversible. 
\end{remark}

This approach leads to a randomized singular control, with each trajectory being the trajectory of a singular control generated by some singular control law after some random stopping time. Mathematically speaking, we need to solve an optimization problem over a pair of $(\Xi=\{\Xi_{r}\}_{r\ge 0},\eta=\{\eta_{r}\}_{r\ge 0})$, where $\Xi_r$ is the optimal singular control law at time $r$ that may possibly depend on the state-time-control triple at time $r$, and $\eta$ is another auxiliary singular control with $\eta_{0-}=0$ and $\eta_{r}\le 1$ for any $r\ge 0$, representing the accumulated activated fraction of the population. Equivalently, under the strategy $\eta$, the agent chooses a random activation time $\tau^{\eta}$ after which she acts according to $\Xi_{\tau^{\eta}}$, where $\tau^{\eta}$ is given below. 

\begin{definition}[Activation time of auxiliary singular control]
\label{st}
Given any $\{\mathcal{F}^{B}_{r}\}_{r\ge t}$-adapted singular control $\eta$ defined on $[t,+\infty)$ with $\eta_{r}\le 1$ for any $r\ge t$, we define the activation time $\tau^{\eta}$ by $\tau^{\eta}:=\inf\{r\ge t|\eta_{r}\ge Z\}$, or equivalently
\begin{equation*}
\mathbb{P}(\tau^{\eta}\le r|\mathcal{F}^{B}_{t})=\eta_{r},\quad\forall r\in[t,+\infty).
\end{equation*}
\end{definition}

The pair of state and control processes under the randomized singular control law $(\Xi,\eta)$ is denoted by $(X^{\Xi,\eta},\xi^{\Xi,\eta})=(X^{\Xi,\tau^{\eta}},\xi^{\Xi,\tau^{\eta}})$, where $(X^{\Xi,r},\xi^{\Xi,r})$ is defined for any $r\in[0,+\infty)$ as the solution to the Skorokhod reflection problem:
\begin{align*}
&\left\{
\begin{array}{l}
dX^{\Xi,r}_{s}=\mu ds+\sigma dB_{s}-d\xi^{\Xi,r}_{s},s\in[0,+\infty), \ \\
(X^{\Xi,r}_{s},s,\xi^{\Xi,r}_{s})\in\overline{W^{\Xi_{r}}_{s}},\ \ \forall s\in[0,+\infty), \ \\
\xi^{\Xi,r}_{s}=y_{0}+\int_{0}^{s}1_{\big\{s'\ge r,(X^{\Xi,r}_{s'-},s',\xi^{\Xi,r}_{s'-})\notin W^{\Xi}_{s} \big\}}d\xi^{\Xi,r}_{s'},s\in[0,+\infty),\ \\
X^{\Xi,r}_{0}=x_{0}.
\end{array}
\right.
\end{align*}
Note that $\Xi_{r}$ only affects the dynamics after time $r$. Therefore, we only consider the admissible set with $\Xi_{r}$ being a partition of the region $\mathcal{Q}^{r}:=\mathbb{R}\times[r,+\infty)\times[0,+\infty)$.

\begin{remark}
Equivalently, the pair of state and control $(X^{\Xi,\eta},\xi^{\Xi,\eta})$ is generated by a randomized singular control law $\Xi^{\tau^{\eta}}$ given by $W^{\Xi^{\tau^{\eta}}}:=W^{\Xi}\bigcup\{(x,t,y)|t<\tau^{\eta}\}$, or notationally
\begin{align*}
\Xi^{\tau^{\eta}}=\left\{
\begin{array}{l}
(\mathcal{Q},\emptyset),\quad t<\tau^{\eta},\\
\big(W^{\Xi},(W^{\Xi})^{c}\big),\quad t\ge\tau^{\eta}.
\end{array}
\right.
\end{align*}
Therefore, our randomization on the singular control is equivalent to a randomization on the singular control law.
\end{remark}

An entropy regularized objective function related to the original problem (\ref{object}) is then defined in the following form:
\begin{equation}
\begin{aligned}
\label{object-B}
&\min_{(\Xi,\eta) }\mathbb{E}_{x_0,0,y_0}\bigg[\int_{0}^{+\infty}e^{-\beta r}e^{aX^{\Xi,\eta}_{r}}dr+\int_{0}^{+\infty}e^{-\beta r}cd\xi^{\Xi,\eta}_{r}-\lambda\int_{0}^{+\infty}e^{-\beta r}\mathcal{E}(\eta_{r})dr\bigg],
\end{aligned}
\end{equation}
where $\mathcal{E}:[0,1]\rightarrow\mathbb{R}$ stands for the entropy function, and $\lambda$ stands for the temperature parameter. 

Using the tower property, we rewrite the entropy regularized objective by
\begin{equation*}
\min_{(\Xi,\eta)}\mathbb{E}_{x_{0},0,y_{0}}\big\{\int_{0}^{+\infty}I(x_{0},0,y_{0},r;\Xi_{r})d\eta_{r}-\lambda\int_{0}^{+\infty}e^{-\beta r}\mathcal{E}(\eta_{r})dr\big\},
\end{equation*}
where the inner objective function $I$ is defined by
\begin{align*}
I(x,t,y,r;\Xi):=&\mathbb{E}_{x,t,y}\bigg[\int_{t}^{+\infty}e^{-\beta (s-t)}e^{aX^{\Xi,r}_{s}}ds+\int_{t}^{+\infty}e^{-\beta (s-t)}cd\xi^{\Xi,r}_{s}\bigg].
\end{align*}
Now, the entropy regularized problem can be rewritten as a two-stage  optimal control formulation in the  following sense:
\begin{equation*}
\min\limits_{\eta}\mathbb{E}_{x_{0},0,y_{0}}\bigg\{\int_{0}^{+\infty}\min\limits_{\Xi_{r}}I(x_{0},0,y_{0},r;\Xi_{r})d\eta_{r}-\lambda\int_{0}^{+\infty}e^{-\beta r}\mathcal{E}(\eta_{r})dr\bigg\}.
\end{equation*}
Recall that at each time $r$, the inner singular control law $\Xi_{r}$ is a partition of  state-time-control space $\mathcal{Q}^{r}=\mathbb{R}\times[r,+\infty)\times[0,+\infty)$. If this optimal $\Xi^{*}_{r}$ intrinsically depends on the activation time $r$ or depends on the initial state-time-control triple $(x,t,y)$, the issue of time-inconsistency arises in the optimal decision of $\{\Xi_{r}\}_{r\in[0,+\infty)}$. However, we will show in the next section that the inner singular control problem is time-consistent, albeit the outer optimal control problem over $\eta$ is time-inconsistent.

\section{Optimal Singular Control Law in the Inner Problem}\label{sec:inner}

We first solve the inner problem to choose the optimal singular control law $\Xi(x,t,y,r)$, i.e., for fixed activation time $r$ and for each initial status $(x,t,y)$, we find a singular law $\Xi^{*}(x,t,y,r)$ that minimizes $I(x,t,y,r;\Xi_{r})$ over $\Xi_{r}$.  

The optimization of $\{\Xi_{r}\}_{r\ge t}$ can be time-inconsistent. At a future time $r+\Delta r$, the optimal singular control law $\Xi^{*}(x,t,y,r+\Delta r)$ can be inconsistent with $\Xi^{*}(x,t,y,r)$. However, it turns out that there is no time-inconsistency issue because of the time-consistent feature of the optimal singular control law.

For the inner optimal control problem, by using the variational method and verification argument, the optimal singular control law $\Xi^{*}=\Xi^{*}(x,t,y,r)$ with the corresponding value function $U(x,t,y,r):=I(x,t,y,r;\Xi^{*})$ are characterized by
\begin{align*}
\left\{
\begin{array}{ll}
&e^{ax}-\beta U(x,t,y,r)+\mathcal{A}U(x,t,y,r)=0,\quad \forall t\in[0,r),\\
&\min\big\{e^{ax}-\beta U(x,t,y,r)+\mathcal{A}U(x,t,y,r),\ c-U_{x}(x,t,y,r)+U_{y}(x,t,y,r)\big\}=0,\quad \forall t\in[r,+\infty)\\
\end{array} \right.
\end{align*}
with
\begin{equation*}
W^{\Xi^{*}(x,t,y,r)}=\big\{(x',t',y')\in\mathcal{Q}^{r}\big|c-U_{x}(x',t',y',r)+U_{y}(x',t',y',r)>0\big\},
\end{equation*}
where $\mathcal{A}$ is defined by
$
\mathcal{A}\varphi:=\varphi_{t}+\mu\varphi_{x}+\frac{1}{2}\sigma^{2}\varphi_{xx} $. 
Defining $\Phi(x,t,y)$ as the solution to
\begin{align}
&\min\Big\{e^{ax}-\beta \Phi(x,t,y)+\mathcal{A}\Phi(x,t,y),\ c-\Phi_{x}(x,t,y)+\Phi_{y}(x,t,y)\Big\}=0,\quad \forall t\in[0,+\infty).\label{eqphi}
\end{align}
Then, the inner optimal value function $U$ satisfies
\begin{equation}
\label{u}
\left\{
\begin{array}{l}
e^{ax}-\beta U(x,t,y,r)+\mathcal{A}U(x,t,y,r)=0,\quad \forall t\in[0,r),\\
U(x,t,y,r)=\Phi(x,t,y),\quad \forall t\in[r,+\infty),
\end{array}
\right.
\end{equation}
and the optimal  singular control law  $\Xi^{*}(x,t,y,r)=\big(W^{\Xi^{*}(x,t,y,r)},(W^{\Xi^{*}(x,t,y,r)})^{c}\big)$ is given by
\begin{equation*}
W^{\Xi^{*}(x,t,y,r)}=\big\{(x',t',y')\in\mathcal{Q}^{r}\big|c-\Phi_{x}(x',t',y')+\Phi_{y}(x',t',y')>0\big\}.
\end{equation*}
It turns out that $\Xi^{*}(x,t,y,r)$ is in fact independent of all $(x,t,y)$, i.e., the optimal singular control law for the problems starting at different $(x,t,y)$ turns out to be the same one. In other words, the sequence of optimal singular control laws is time-consistent.  Therefore, the time-consistent sequence of $\{\Xi^{*}_{r}=\Xi^{*}(x,t,y,r)\}_{r\ge 0}$ with each $\Xi^{*}_{r}$ as a partition of $\mathcal{Q}^{r}\subset\mathcal{Q}$ can be presented by a singular control law $\Xi^{*}$ partitioning $\mathcal{Q}$ with
\begin{equation}
\label{xi}
W^{\Xi^{*}}:=\big\{(x,t,y)\in\mathcal{Q}\big|c-\Phi_{x}(x,t,y)+\Phi_{y}(x,t,y)>0\big\}.
\end{equation}

We next give the solution $\Phi$ to (\ref{eqphi}), the optimal inner strategy $\Xi^{*}$ of the form (\ref{xi}) and solution $U$ to (\ref{u}) all in explicit form. Observing the independence of the PDE (\ref{eqphi}) on variables $t$ and $y$, we obtain $\Phi(x,t,y)=\Phi(x)$ with
\begin{equation}
\label{phi-explicit}
\Phi(x)=\left\{
\begin{array}{l}
C_{a}e^{ax}+C_{b}e^{bx},x<\hat{x},\\
c(x-\hat{x})+C_{a}e^{a\hat{x}}+C_{b}e^{b\hat{x}},x\ge \hat{x},
\end{array}
\right.
\end{equation}
where
\begin{align*}
&b:=\frac{\sqrt{\mu^{2}+2\beta\sigma^{2}}-\mu}{\sigma^{2}}>a, \ \ \ \hat{x}:=\frac{1}{a}\ln(\frac{bc(\beta-\mu a-\frac{1}{2}\sigma^{2}a^{2})}{ba-a^{2}}),\\
&C_{a}:=\frac{1}{\beta-\mu a-\frac{1}{2}\sigma^{2}a^{2}},\ \
C_{b}:=-\frac{a^{2}}{b^{2}(\beta-\mu a-\frac{1}{2}\sigma^{2}a^{2})}e^{(a-b)\hat{x}}.
\end{align*}
Clearly, the optimal inner strategy $\Xi^{*}$ is explicitly given by
\begin{equation}
\label{xi-explicit}
W^{\Xi^{*}}=\big\{(x,t,y)\in\mathcal{Q}\big|x>\hat{x}\big\}.
\end{equation}
Observing the independence of (\ref{u}) on variable $y$, we have $U(x,t,y,r)=U(x,t,r):=U^{r}(x,t)$ with $U^{r}$ solving
\begin{equation*}
\left\{
\begin{array}{l}
e^{ax}-\beta U^{r}(x,t)+\mathcal{A}U^{r}(x,t)=0,\quad \forall t\in[0,r),\\
U^{r}(x,t)=\Phi(x),\quad \forall t\in[r,\infty),
\end{array}
\right.
\end{equation*}
which can be considered as a terminal value problem in $[0,r]$.  Note that $C_{a}e^{ax}$ solves 
\begin{equation*}
e^{ax}-\beta U^{r}(x,t)+\mathcal{A}U^{r}(x,t)=0
\end{equation*}
as an equation of $U^{r}$. By the Feynman–Kac formula, the solution to 
\begin{equation*}
\left\{
\begin{array}{l}
-\beta U^{r}(x,t)+\mathcal{A}U^{r}(x,t)=0,\forall t\in[0,r),\\
U^{r}(x,r)=\Phi(x)-C_{a}e^{ax},
\end{array}
\right.
\end{equation*}
is given by
\begin{equation}
e^{-\beta(r-t)}\mathbb{E}_{x,t}\big[\Phi(X_{r})-C_{a}e^{aX_{r}}\big],
\end{equation}
where $X_{r}=x+\mu(r-t)+\sigma B_{r-t}$. Hence we obtain
\begin{equation*}
U^{r}(x,t)=C_{a}e^{ax}+e^{-\beta(r-t)}\int_{-\infty}^{\infty}\frac{1}{\sqrt{2\pi\sigma^{2}(r-t)}}e^{-\frac{[y-x-\mu(r-t)]^{2}}{2\sigma^{2}(r-t)}}\big[\Phi(y)-C_{a}e^{ay}\big]dy,\quad \forall t\in[0,r).
\end{equation*}
One can observe that for $t\in[0,r)$,  $U(x,t,r)$ depends on $t,r$ only through $r-t$, i.e., $U(x,t,r)=\Psi(x,r-t)$, where
\begin{equation*}
\Psi(p,q):=C_{a}e^{ap}+e^{-\beta q}\int_{-\infty}^{\infty}G(y,p,q)\big[\Phi(y)-C_{a}e^{ay}\big]dy,\forall (p,q)\in\mathbb{R}\times[0,+\infty)
\end{equation*}
with
\begin{equation*}
G(y,p,q):=\frac{1}{\sqrt{2\pi\sigma^{2}q}}e^{-\frac{[y-p-\mu q]^{2}}{2\sigma^{2}q}}.
\end{equation*}

\section{Characterization of Equilibrium in the Outer Problem}\label{sec:outer}
In this section we tackle the outer problem:
\begin{equation}\label{OP}
\min\limits_{\eta}\mathbb{E}_{x_{0},0,0}\Bigg\{\int_{0}^{+\infty}U(x_{0},0,r)d\eta_{r}-\lambda\int_{0}^{+\infty}e^{-\beta r}\mathcal{E}(\eta_{r})dr\Bigg\},
\end{equation}
where $\mathbb{E}_{x,t,z}$ stands for the conditional expectation given $X_{t-}=x$, $\eta_{t-}=z$. It is easy to see from the analysis in the last section that $\eta_{r}$ represents the fraction of the population that has followed the singular control law $\Xi^{*}$ up to time $r$. We also call $\eta_{r}$ the activated fraction of the population up to time $r$.

At an arbitrary future time $t$, suppose that the activated fraction at $t$ is $z\in[0,1]$, i.e., $\eta_{t-}=z$, then the agent should choose a strategy $\{\eta_{s}\}_{s\in[t,+\infty)}$ to control the remaining $1-z$ fraction of population that have not  activated before time $t$. Therefore, the outer problem at time $t$ is to minimizing over $\eta\in\mathcal{D}_{t,z}$ under the objective functional:
\begin{equation}
\label{en-regu}
J(x,t,z;\eta):=\mathbb{E}_{x,t,z}\bigg\{\int_{t}^{+\infty}U(X_{t},t,r)d\eta_{r}-\lambda\int_{t}^{+\infty}e^{-\beta(r-t)}\mathcal{E}(\eta_{r})dr\bigg\},\forall (x,t,z)\in\mathbb{R}\times[0,+\infty)\times[0,1],
\end{equation}
where $X$ is the uncontrolled state process governed by
\begin{equation*}
\left\{
\begin{array}{l}
dX_{r}=\mu dr+\sigma dB_{r},\forall r\in[t,+\infty),\\
X_{t}=x.
\end{array}
\right.
\end{equation*}
The admissible set $\mathcal{D}_{t,z}$ is given by
\begin{align*}
\mathcal{D}_{t,z}:=\big\{\eta=\{\eta_{r}\}_{r\in[t,+\infty)} \mid &\eta\ \text{is non-decreasing, c\`{a}dl\`{a}g and}\ \{\mathcal{F}^{B}_{r}\}_{r\in[t,+\infty)}\text{-adapted}\\ & \text{with}\ \eta_{t-}=z\ {\rm and}\ \lim\limits_{r\rightarrow+\infty}\eta_{r}=1,\ a.s.\big\}.
\end{align*}
Because the integrand $U$ relies on $X_{t}$ and $t$, the problem is time-inconsistent and the dynamic programming principle is not applicable. The problem is a finite-fuel type time-inconsistent singular control problem where the time-inconsistency is due to the dependence of the initial states. We therefore look for a time-consistent equilibrium singular control law $\eta$.

\begin{definition}[Admissible auxiliary singular control law]
Suppose that $\Upsilon=(\mathcal{W}^{\Upsilon},(\mathcal{W}^{\Upsilon})^{c})$ is a partition of 
$
\mathcal{R}:=\mathbb{R}\times[0,+\infty)\times[0,1] $.
We call $\Upsilon$ an \emph{admissible auxiliary singular control law} if 
\begin{equation}
{\label{ceilingcond}}
(x,t,1)\in\mathcal{W}^{\Upsilon},\ \ \forall (x,t)\in\mathbb{R}\times[0,+\infty)
\end{equation}
and for any $(x,t,z)\in\mathcal{R}$, the Skorohod reflection problem 
\begin{equation}\label{SKRP}
\left\{
\begin{array}{l}
dX_{r}=\mu dr+\sigma dB_{r},\ \forall r\in[t,+\infty),\\
(X_{r},r,\eta^{\Upsilon}_{r})\in\overline{\mathcal{W}^{\Upsilon}},\ \forall r\in[t,+\infty),\\
\eta^{\Upsilon}_{r}=z+\int_{t}^{r}1_{\big \{(X_{r'-},r',\eta^{\Upsilon}_{r'-})\notin\mathcal{W}^{\Upsilon}\big\}}d\eta^{\Upsilon}_{r'}, \ \forall r\in[t,+\infty),\\
X_{t-}=x
\end{array}
\right.
\end{equation}
has a unique strong solution $\eta^{\Upsilon}:=\eta^{x,t,z,\Upsilon}$. We call $\eta^{\Upsilon}:=\eta^{x,t,z,\Upsilon}$ the auxiliary singular control generated by $\Upsilon$ at $(x,t,z)$.
\end{definition}

\begin{remark}
Due to the independence of the state process $X$ on $\eta$, one can easily show that a division $\Upsilon$ is admissible if and only if (\ref{ceilingcond}) holds. Moreover, $\eta^{\Upsilon}$ has the explicit expression
\begin{equation*}
\eta^{x,t,z,\Upsilon}_{r}=z+\sup\limits_{s\in[t,r]}\inf\{a\ge 0\mid(X_{s},s,z+a)\in\overline{\mathcal{W}^{\Upsilon}}\}.
\end{equation*}
\end{remark}

\begin{definition}[Equilibrium auxiliary singular control law]
Suppose that $\hat{\Upsilon}$ is an admissible auxiliary singular control law. We call $\hat{\Upsilon}$ an \emph{equilibrium auxiliary singular control law} and $J(x,t,z;\eta^{x,t,z,\hat{\Upsilon}})$ the corresponding equilibrium value function at $(x,t,z)$ if for any initial $(x,t,z)\in\mathcal{R}$, any $\eta'\in\mathcal{D}_{t,z}$ with the property that there exist a constant $M>0$ such that $\eta_{(t+h)-}-\eta_{t}\le Mh$ for\  $h>0$  sufficiently small, it holds that
    \begin{equation}\label{EASCL}
    \liminf\limits_{h\rightarrow 0}\frac{J(x,t,z;\eta^{h})-J(x,t,z;\eta^{x,t,z,\hat{\Upsilon}})}{h}\ge 0
    \end{equation}
where $\eta^{h}$ is defined by
\begin{equation}
 \label{xih0}
\eta^{h}_{r}=\left\{
\begin{array}{l}
\eta'_{r}, \ r\in[t,t+h),\\
\eta^{X_{(t+h)-},t+h,\eta'_{(t+h)-},\hat{\Upsilon}}_{r}, \ r\in[t+h,+\infty).
\end{array}
\right.
\end{equation}
\end{definition}
\noindent
We have the following verification theorem to characterize the time-consistent equilibrium.
\begin{theorem}[Verification theorem of the outer problem]
\label{verif-outer}
Given function $V(x,t,z)$ and a family of functions $\{f^{p,s}(x,t,z)\}_{(p,s)\in\mathbb{R}\times[0,+\infty)}$, denote $f(x,t,z,p,s):=f^{p,s}(x,t,z)$. Consider the infinitesimal operator 
\begin{equation*}
\mathcal{A}\varphi(x,t,z):=\varphi_{t}(x,t,z)+\mu\varphi_{x}(x,t,z)+\frac{1}{2}\sigma^{2}\varphi_{xx}(x,t,z),
\end{equation*}
and  $\hat{\Upsilon}$ from
\begin{align*}
\mathcal{W}^{\hat{\Upsilon}}:=\{(x,t,z)\in\mathcal{R}|U(x,t,t)+V_{z}(x,t,z)=0\}.
\end{align*}
Then $\hat{\Upsilon}$ is an equilibrium auxiliary singular control law and $V(x,t,z)$ is the corresponding equilibrium value function at $(x,t,z)$ if the following conditions hold:\\
(a) $V,f^{p,s}\in C^{2,1,1}(\mathcal{R}\setminus\partial\mathcal{W}^{\hat{\Upsilon}})$ where $\partial\mathcal{W}^{\hat{\Upsilon}}$ consists of finite number of boundary lines $\{(x,t,z)|x=x_{i}\}$ with $i$ belonging to a finite index set, and $V_{x}$ is absolutely continuous across $\partial\mathcal{W}^{\hat{\Upsilon}}$. $f(x,t,z,p,s)$ is $C^{1}$ in $s$ and $C^{2}$ in $p$.\\
(b) The extended HJB system holds
\begin{align}
&\min\{-\lambda\mathcal{E}(z)+\mathcal{AV}(x,t,z)-\mathcal{A}f(x,t,z,x,t)+(\mathcal{A}f^{x,t})(x,t,z),U(x,t,t)+V_{z}(x,t,z)\}=0,\label{v}\\
&-\lambda e^{-\beta(t-s)}\mathcal{E}(z)+(\mathcal{A}f^{p,s})(x,t,z)=0,\quad\forall (x,t,z)\in\mathcal{W}^{\hat{\Upsilon}},\forall(p, s)\in\mathbb{R}\times[0, t],\label{fw}\\
&U(p,s,t)+f^{p,s}_{z}(x,t,z)=0,\quad\forall (x,t,z)\notin\mathcal{W}^{\hat{\Upsilon}},\quad \forall(p, s)\in\mathbb{R}\times[0, t].\label{fp}
\end{align}
(c) $\hat{\Upsilon}$ is admissible.\\
(d) For $\varphi=V,\ f^{p,s},\ (x,t,z)\mapsto f(x,t,z,x,t)$, $\eta'\in\mathcal{D}_{t,z}$ and any $t<T$,
\begin{equation*}
\mathbb{E}_{x,t,z}\int_{t}^{T}\varphi^{2}_{x}(X_{r},r,\eta'_{r})dr<\infty.
\end{equation*}
(e) For any initial $(x,t,z)\in\mathcal{R}$ and any admissible $\eta'\in\mathcal{D}_{t,z}$, the transversality condition holds that
\begin{align*}
&\lim\limits_{T\rightarrow\infty}\mathbb{E}_{x,t,z}f^{p,s}(X_{T},T,\eta'_{T})=0,\\
&\lim\limits_{T\rightarrow\infty}\mathbb{E}_{x,t,z}[V(X_{T},T,\eta'_{T})-f(X_{T},T,\eta'_{T},X_{T},T)]=0.
\end{align*}
Moreover, $f^{p,s}$ has the probabilistic representation 
\begin{equation}
\label{fps}
f^{p,s}(x,t,z)=\mathbb{E}_{x,t,z}\bigg\{\int_{t}^{+\infty}U(p,s,r)d\hat{\eta}_{r}-\lambda\int_{t}^{+\infty}e^{-\beta(r-s)}\mathcal{E}(\hat{\eta}_{r})dr\bigg\}.
\end{equation}
\end{theorem}
\begin{proof}
For the fixed initial value $(x,t,z)\in\mathcal{R}$, let us denote $\hat{\eta}=\eta^{x,t,z,\hat{\Upsilon}}$.
\vskip 5pt\noindent
{\bf Step 1:} We show that $f$ has the probability representation (\ref{fps}). Applying It\^{o}-Tanaka-Meyer's formula to $f^{p,s}(X_{r},r,\hat{\eta}_{r})$ in $\mathcal{W}^{\hat{\Upsilon}}$, we obtain 
\begin{align*}
f^{p,s}(X_{T-},T,\hat{\eta}_{T-})-f^{p,s}(x,t,z)&=\int_{t}^{T}(\mathcal{A}f^{p,s})(X_{r},r,\hat{\eta}_{r})dr+\int_{t}^{T}\sigma f^{p,s}_{x}(X_{r},r,\hat{\eta_{r}})dB_{r}\\&+\int_{t}^{T}f^{p,s}_{z}(X_{r},r,\hat{\eta}_{r})d\hat{\eta}^{c}_{r}
+\sum\limits_{r\in[t,T)}\int_{0}^{\Delta\hat{\eta}_{r}}f^{p,s}_{z}(X_{r},r,\eta_{r-}+u)du.
\end{align*}
Taking expectations, letting $T\rightarrow+\infty$ and applying (\ref{fw}) and (\ref{fp}), we obtain
\begin{align*}
f^{p,s}(x,t,z)=&\mathbb{E}_{x,t,z}\bigg[-\int_{t}^{+\infty}\lambda e^{-\beta(r-s)}\mathcal{E}(\hat{\eta}_{r})dr+\int_{t}^{+\infty}U(p,s,r)d\hat{\eta}^{c}_{r}+\sum\limits_{r\in[t,+\infty)}\int_{0}^{\Delta\hat{\eta}_{r}}U(p,s,r)du\bigg]\\
=&\mathbb{E}_{x,t,z}\bigg[-\int_{t}^{+\infty}\lambda e^{-\beta(r-s)}\mathcal{E}(\hat{\eta}_{r})dr+\int_{t}^{+\infty}U(p,s,r)d\hat{\eta}_{r}\bigg].
\end{align*}
\vskip 3pt \noindent
{\bf Step 2:} We prove that $V(x,t,z)=f(x,t,z,x,t)=J(x,t,z;\hat{\eta})$. Applying It\^{o}-Tanaka-Meyer's formula to $V(X_{r},r,\hat{\eta}_{r})$ and $f(X_{r},r,\hat{\eta},X_{r},r)$ in $\mathcal{W}^{\hat{\Upsilon}}$, we obtain 
\begin{align}
V(X_{T-},T,\hat{\eta}_{T-})-V(x,t,z)&=\int_{t}^{T}(\mathcal{A}V)(X_{r},r,\hat{\eta}_{r})dr+\int_{t}^{T}\sigma V(X_{r},r,\hat{\eta}_{r})dB_{r}\notag\\
&+\int_{t}^{T}V_{z}(X_{r},r,\hat{\eta}_{r})d\hat{\eta}^{c}_{r}+\sum\limits_{r\in[t,T)}\int_{0}^{\Delta\hat{\eta}_{r}}V_{z}(X_{r},r,\hat{\eta}_{r-}+u)du\label{difv}
\end{align}
and
\begin{align}
f(X_{T-},T,\hat{\eta}_{T-},X_{T-},T)&-f(x,t,z,x,t)
=\int_{t}^{T}\mathcal{A}f(X_{r},r,\hat{\eta}_{r},X_{r},r)dr+\int_{t}^{T}\sigma f(X_{r},r,\hat{\eta}_{r},X_{r},r)dB_{r}\notag\\
&+\int_{t}^{T}f_{z}(X_{r},r,\hat{\eta}_{r},X_{r},r)d\hat{\eta}^{c}_{r}+\sum\limits_{r\in[t,T)}\int_{0}^{\Delta\hat{\eta}_{r}}f_{z}(X_{r},r,\hat{\eta}_{r-}+u,X_{r},r)du.\label{diff}
\end{align}
Comparing (\ref{v}), (\ref{fw}) and (\ref{fp}), we obtain
\begin{align*}
&\mathcal{A}V(x,t,z)=\mathcal{A}f(x,t,z,x,t),\  \ \forall (x,t,z)\in\mathcal{W}^{\hat{\Upsilon}},\\
&V_{z}(x,t,z)=f^{t,z}_{z}(x,t,z),\ \ \forall (x,t,z)\notin\mathcal{W}^{\hat{\Upsilon}}.
\end{align*}
Taking expectations on both sides of (\ref{difv}) and (\ref{diff}), letting $T\rightarrow+\infty$, and plugging in the last two equations, we obtain 
\begin{equation*}
V(x,t,z)=f(x,t,z,x,t).
\end{equation*}
Using (\ref{fps}) yields
\begin{equation*}
V(x,t,z)=f(x,t,z,x,t)=J(x,t,z;\hat{\Upsilon}).
\end{equation*}
\vskip 3pt\noindent
{\bf Step 3:} We show that $\hat{\Upsilon}$ is an equilibrium auxiliary singular control law. For any $(t,z)$ and $\eta'\in\mathcal{D}_{t,z}$, define
\begin{equation*}
f^{\eta'}(x,t,z,p,s)=\mathbb{E}_{x,t,z}\bigg\{\int_{t}^{+\infty}U(p,s,r)d\eta'_{r}-\lambda\int_{t}^{+\infty}e^{-\beta(r-s)}\mathcal{E}(\eta'_{r})dr\bigg\}.
\end{equation*}
Then
\begin{align*}
f^{\eta^{h}}(x,t,z,x,t)=&\mathbb{E}_{x,t,z}\bigg\{f^{\eta^{h}}(X_{(t+h)-},t+h,\eta'_{(t+h)-},x,t)-\int_{t}^{t+h}\lambda e^{-\beta(r-s)}\mathcal{E}(\eta'_{r})dr\\
&+\int_{t}^{t+h}U(x,t,r)d(\eta')^{c}_{r}+\sum\limits_{r\in[t,t+h)}U(x,t,r)\Delta\eta'_{r}\bigg\}.
\end{align*}
Noting that $f^{\eta^{h}}(X_{(t+h)-},t+h,\eta'_{(t+h)-},x,t)=f(X_{(t+h)-},t+h,\eta'_{(t+h)-},x,t)$, and $f_{x}$ is absolutely continuous due to the absolutely continuity of $V_{x}=f_{x}+f_{p}$ and $f$ being $C^{2}$ in $p$, we obtain
\begin{align*}
&J(x,t,z;\eta^{h})-J(x,t,z;\hat{\eta})\\
=&\mathbb{E}_{x,t,z}\bigg\{f(X_{(t+h)-},t+h,\eta'_{(t+h)-},x,t)-f(x,t,z,x,t)-\int_{t}^{t+h}\lambda e^{-\beta(r-s)}\mathcal{E}(\eta'_{r})dr\\
&+\int_{t}^{t+h}U(x,t,r)d(\eta')^{c}_{r}+\sum\limits_{r\in[t,t+h)}U(x,t,r)\Delta\eta'_{r}\bigg\}\\
=&\mathbb{E}_{x,t,z}\bigg\{\int_{t}^{t+h}(\mathcal{A}f^{x,t})(X_{r},r,\eta'_{r})dr+\int_{t}^{t+h}f^{x,t}_{z}(X_{r},r,\eta'_{r})d(\eta')^{c}_{r}+\sum\limits_{r\in[t,t+h)}\int_{0}^{\Delta\eta'_{r}}f_{z}^{x,t}(X_{r},r,\eta'_{r-}+u)du\\
&-\int_{t}^{t+h}\lambda e^{-\beta(r-s)}\mathcal{E}(\eta'_{r})dr+\int_{t}^{t+h}U(x,t,r)d(\eta')^{c}_{r}+\sum\limits_{r\in[t,t+h)}U(x,t,r)\Delta\eta'_{r}\bigg\}.
\end{align*}
Thus
\begin{equation*}
\liminf\limits_{h\rightarrow 0}\frac{J(x,t,z;\eta^{h})-J(x,t,z;\hat{\eta})}{h}\ge 0.
\end{equation*}
\end{proof}

We will establish in section \ref{existence} the existence of an equilibrium in the outer time-inconsistent problem. Here, we will first take this existence as given and utilize it for the RL design in the next section.

Next, the optimal solution to the original problem implies that the whole population should activate at the initial time, i.e., $\eta^{0}_{r}\equiv 1,\ \forall r\ge t$. If the equilibrium auxiliary singular control exists, we are interested in whether the equilibrium value function can converge to the optimal value function of the original problem as $\lambda\rightarrow 0$.

Let us denote the equilibrium auxiliary singular control law for the entropy regularized problem (\ref{en-regu}) with temperature $\lambda>0$ by $\Upsilon^{\lambda}$, the generated equilibrium auxiliary singular control by $\eta^{\lambda}:=\eta^{x,t,z,\Upsilon^{\lambda}}$, and the equilibrium value function by $V^{\lambda}(x,t,z):=J^{\lambda}(x,t,z;\eta^{\lambda}),\forall \lambda>0$, where $J^{\lambda}$ is given by (\ref{en-regu}) with the superscript emphasizing the dependence on $\lambda$. Also denote the value function of $\eta^{0}$ for the problem (\ref{en-regu}) with temperature $0$ by $V^{0}(x,t,z):=J^{0}(x,t,z;\eta^{0})=(1-z)\Phi(x)$.

\begin{proposition}
$V^{0}(x,t,z):=(1-z)\Phi(x)$ is an equilibrium value function for Problem (\ref{en-regu}) with temperature $\lambda=0$. The corresponding equilibrium singular control law is $\Upsilon^{0}:=(\emptyset,\mathcal{R})$, and $\eta^{0}$ is the equilibrium singular control generated by $\Upsilon^{0}$ at $(x,t,z)$.
\end{proposition}
\begin{proof}
It is clear that $\eta^{0}$ is the equilibrium singular control generated by $\Upsilon^{0}$ at $(x,t,z)$, and $\eta^{0}$ is admissible. By the optimality of the inner problem, it holds that $\Psi(x,0)=\Phi(x)\le U(x,t,r)=\Psi(x,r-t)$ for any $r\ge t$, which implies that $\Psi_{q}(x,0)\ge 0$. Then $V^{0}(x,t,z)$ and $f^{0}(x,t,z,p,s):=(1-z)\Psi(p,t-s)$ satisfy all conditions of Theorem \ref{verif-outer}. Therefore, we obtain all the desired results. 
\end{proof}\noindent
Similarly, we have the following result, and the proof is omitted.
\begin{proposition}
If $z=1$ is a maximizer of $\mathcal{E}(z)$ over $z\in[0,1]$, then for Problem (\ref{en-regu}), $\Upsilon^{0}:=(\emptyset,\mathcal{R})$ is an equilibrium auxiliary singular control law and $\eta^{0}$ is an equilibrium auxiliary singular control.
\end{proposition}
\noindent
Furthermore, we have the convergence result for the value function as $\lambda\rightarrow 0$.

\begin{proposition}
If an equilibrium value function  $V^{\lambda}$ exists, then it satisfies
\begin{equation*}
V^{0}(x,t,z)-\frac{\lambda}{\beta}\sup\limits_{z\in[0,1]}\mathcal{E}(z)\le V^{\lambda}(x,t,z)\le V^{0}(x,t,z)-\frac{\lambda}{\beta}\mathcal{E}(1),\quad\forall (x,t,z)\in\mathcal{R},
\end{equation*}
which implies that $\lim\limits_{\lambda\rightarrow 0}V^{\lambda}(x,t,z)=V^{0}(x,t,z)$ in $C^{0}(\mathcal{R})$.
\end{proposition}
\begin{proof}
On the one hand, using the fact that $U(x,t,r)\ge U(x,t,t)$ for any $(x,t,r)$ with $r\ge t$, we obtain
\begin{align*}
V^{\lambda}(x,t,z)=&\mathbb{E}_{x,t,z}\Big\{\int_{t}^{+\infty}U(x,t,r)d\hat{\eta}_{r}-\lambda\int_{t}^{+\infty}e^{-\beta(r-t)}\mathcal{E}(\hat{\eta}_{r})dr\Big\}\\
\ge&(1-z)U(x,t,t)-\lambda\sup\limits_{z\in[0,1]}\mathcal{E}(z)\int_{t}^{+\infty}e^{-\beta(r-t)}dr\\
=&V^{0}(x,t,z)-\frac{\lambda}{\beta}\sup\limits_{z\in[0,1]}\mathcal{E}(z).
\end{align*}
On the other hand, $\eta^{0}_{r}\equiv1 \ (\ \forall r\ge t ) $ is a perturbed strategy of $\hat{\eta}$ by setting the initial value of the perturbation to $1$,  the equilibrium condition implies 
\begin{equation*}
J^{\lambda}(x,t,z;\eta^{0})-V^{\lambda}(x,t,z)\ge 0.
\end{equation*}
Then
\begin{equation*}
V^{0}(x,t,z)-\frac{\lambda}{\beta}\mathcal{E}(1)\ge V^{\lambda}(x,t,z).
\end{equation*}
\end{proof}

\section{Actor-Critic RL based on the Two-Stage Formulation}\label{sec:ACRL}

In this section, we design a type of actor-critic RL algorithm by using the randomized singular control laws under entropy regularization. In each step, the agent first evaluates all functions needed, then iterates the strategy, and finally uses the new strategy to generate an action.

\subsection{Policy evaluation}
Two functions play important roles in our algorithm design. The first one is $\Phi(x)=U(x,t,t)=\Psi(x,0),\forall t$, which represents the optimal value function of the original problem. In each step, when the strategy of the inner problem is $\Xi$, the agent should evaluate $\Phi(x;\bar{x}):=\Phi(x;\Xi_{\bar{x}})$ where
\begin{equation}
\Phi(x;\Xi):=\Phi(x,t,y;\Xi):=\mathbb{E}_{x,t,y}\bigg\{\int_{t}^{+\infty}e^{-\beta(r-t)}e^{aX^{\Xi}_{r}}dr+\int_{t}^{+\infty}e^{-\beta(r-t)}cd\xi^{\Xi}_{r}\bigg\},\forall(x,t,y)\in\mathcal{Q}\label{phi}
\end{equation}
with $(X^{\Xi},\xi^{\Xi}):=(X^{x,t,y,\Xi},\xi^{x,t,y,\Xi})$ being the state and control generated by $\Xi$ at $(x,t,y)$.

The function $\Phi(x;\Xi)$ can be evaluated using the following martingale characterization.
\begin{theorem}
\label{pe-phi}
Let $\Xi$ be an admissible singular control law for the inner problem, and $\Phi(x;\Xi)$ is a given function. Then $\Phi(x;\Xi)$ is the value function of $\Xi$ in the sense of (\ref{phi}) if and only if the process
\begin{equation*}
M^{\Phi}:=\bigg\{M^{\Phi}_{r}:=e^{-\beta(r-t)}\Phi(X^{\Xi}_{r};\Xi)+\int_{t}^{r}e^{-\beta(r'-t)}e^{aX^{\Xi}_{r'}}dr'+\int_{t}^{r}e^{-\beta(r'-t)}cd\xi^{\Xi}_{r'}\bigg\}
\end{equation*}
is an $\{\mathcal{F}_{r}\}_{r\in[t,+\infty)}$-martingale.
\end{theorem}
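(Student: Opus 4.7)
The plan is to prove both directions through a standard dynamic-programming-type argument exploiting the strong Markov property of the controlled diffusion $X^{\Xi}$ together with the time-homogeneity of the admissible singular control law $\Xi$. Since $\Xi$ is a fixed partition of $\mathcal{Q}$ that is independent of the initial condition, the shifted pair $(X^{\Xi}_{r+\cdot},\xi^{\Xi}_{r+\cdot}-\xi^{\Xi}_{r})$ is again generated by $\Xi$ starting from $X^{\Xi}_{r}$, and this structural stationarity is the key fact powering both implications.

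For the necessity direction, assume $\Phi(\cdot;\Xi)$ is defined by \eqref{phi}. Splitting the infinite-horizon integrals at an arbitrary deterministic $r\ge t$ and applying the strong Markov property combined with the time-homogeneity of $\Xi$ yields the flow identity
\[
\Phi(x;\Xi)=\mathbb{E}_{x,t,y}\Bigg[\int_{t}^{r}e^{-\beta(r'-t)}e^{aX^{\Xi}_{r'}}dr'+\int_{t}^{r}e^{-\beta(r'-t)}c\,d\xi^{\Xi}_{r'}+e^{-\beta(r-t)}\Phi(X^{\Xi}_{r};\Xi)\Bigg].
\]
The right-hand side is exactly $\mathbb{E}_{x,t,y}[M^{\Phi}_{r}]$, so $r\mapsto \mathbb{E}[M^{\Phi}_{r}]$ is constant. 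Running the same argument conditionally between two times $r_{1}<r_{2}$ upgrades this to $\mathbb{E}[M^{\Phi}_{r_{2}}\mid \mathcal{F}_{r_{1}}]=M^{\Phi}_{r_{1}}$, which is the martingale property.

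For the sufficiency direction, assume $M^{\Phi}$ is an $\{\mathcal{F}_{r}\}$-martingale. Taking expectations at an arbitrary $T>t$ and using $M^{\Phi}_{t}=\Phi(x;\Xi)$ gives
\[
\Phi(x;\Xi)=\mathbb{E}_{x,t,y}\Bigg[e^{-\beta(T-t)}\Phi(X^{\Xi}_{T};\Xi)+\int_{t}^{T}e^{-\beta(r'-t)}e^{aX^{\Xi}_{r'}}dr'+\int_{t}^{T}e^{-\beta(r'-t)}c\,d\xi^{\Xi}_{r'}\Bigg].
\]
Letting $T\to\infty$, the last two terms converge by monotone convergence to the integrals appearing in \eqref{phi}. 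The identification of $\Phi(x;\Xi)$ with the expectation defining the value is then completed by the transversality $\lim_{T\to\infty}\mathbb{E}_{x,t,y}[e^{-\beta T}\Phi(X^{\Xi}_{T};\Xi)]=0$, which follows from the standing discount hypothesis $\beta>\mu a+\tfrac{1}{2}\sigma^{2}a^{2}$ together with the exponential-type growth bound $|\Phi(x;\Xi)|\le K(1+e^{ax})$ obtained by directly estimating the defining expectation using $e^{aX_{r}}$ without reinsurance as an upper bound on the controlled running cost.

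The main obstacle is the technical bookkeeping around the jumps of the singular control $\xi^{\Xi}$ in the flow identity: one must verify that the Stieltjes integrals against $d\xi^{\Xi}_{r'}$ decompose correctly across the splitting time $r$ (including any possible atom of $\xi^{\Xi}$ at $r$) and that the exponential-growth bound on $\Phi(\cdot;\Xi)$ is uniform enough to justify the dominated/monotone passage and the vanishing of the transversality term. Both points are routine under the standing admissibility assumption on $\Xi$ and the discount hypothesis, but should be stated explicitly so that the ``if and only if'' holds within the admissible class.
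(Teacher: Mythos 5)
The paper itself states Theorem \ref{pe-phi} without giving a proof (it is presented as the standard martingale characterization used for policy evaluation), so there is no in-paper argument to compare against and your attempt must be judged on its own. Your necessity direction is the expected argument and is essentially correct: since the continuation of the Skorokhod problem after any time $r$ is again generated by $\Xi$ from $(X^{\Xi}_{r},r,\xi^{\Xi}_{r})$ and $\Phi(\cdot;\Xi)$ does not depend on $(t,y)$, the flow identity shows $M^{\Phi}_{r}=\mathbb{E}[Z\mid\mathcal{F}_{r}]$ for the total discounted cost $Z$, which is integrable under $\beta>\mu a+\tfrac12\sigma^{2}a^{2}$ (plus finiteness of the expected discounted control cost), and hence $M^{\Phi}$ is a (uniformly integrable) martingale; conditioning between $r_{1}<r_{2}$ as you indicate is exactly how this is phrased.

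The sufficiency direction, however, contains two genuine gaps. First, your transversality step is circular: you justify the bound $|\Phi(x;\Xi)|\le K(1+e^{ax})$ ``by directly estimating the defining expectation'', but in this direction $\Phi(\cdot;\Xi)$ is only a given function for which $M^{\Phi}$ is assumed to be a martingale --- the representation (\ref{phi}) is the conclusion, so it cannot be used to estimate $\Phi$. You must either impose a growth/transversality hypothesis on the candidate $\Phi$ (as is standard for such characterizations) or prove that the martingale property forces it; the latter is not automatic, since for a threshold law one can add to the true value function a $\beta$-harmonic correction $h$ with $h'(\bar x)=0$ on the waiting region without destroying the local martingale property, and excluding such corrections is precisely where a growth or transversality condition must enter (or where one must show the corrected process fails to be a true martingale). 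Second, the identity $M^{\Phi}_{t}=\Phi(x;\Xi)$ you use is false when $\Xi$ prescribes an initial jump at time $t$: then $M^{\Phi}_{t}=\Phi(X^{\Xi}_{t};\Xi)+c\,\Delta\xi^{\Xi}_{t}$ with $X^{\Xi}_{t}\neq x$, and since the controlled state never revisits the interior of the action region, the martingale property only identifies $\Phi$ on the closure of the waiting region; your argument should either restrict the identification accordingly or add the consistency requirement $\Phi(x;\Xi)=c\,\Delta\xi^{\Xi}_{t}+\Phi(X^{\Xi}_{t};\Xi)$ on the action region (which is how the parameterization $\Phi^{\theta}(\cdot;\bar x)$ in the paper is in fact built, via the linear extension with slope $c$ beyond $\bar x$). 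Your closing paragraph flags jump bookkeeping in general terms but does not catch either of these specific points.
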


By the definition of $U$, we have $U(x,t,r)=\Phi(x,t,y;\Xi^{*,r})$ where $\Xi^{*,r}$ is defined by $W^{\Xi^{*,r}}=\{(x,t,y)|t<r\ {\rm or} \ (x,t,y)\in W^{\Xi^{*}}\}$. Therefore, the function $U$ can also be evaluated using Theorem \ref{pe-phi}.

The second one is $V(x,t,z)=f^{x,t}(x,t,z)$, which represents the equilibrium value function of the outer problem. In each step, when the strategy of the outer problem is $\Upsilon$, the agent should evaluate $f^{p,s}(x,t,z;\Upsilon)$, which is defined by
\begin{equation*}
f^{p,s}(x,t,z;\Upsilon):=\mathbb{E}_{x,t,z}\bigg\{\int_{t}^{+\infty}U(p,s,r)d\eta^{\Upsilon}_{r}-\lambda\int_{t}^{+\infty}e^{-\beta(r-s)}\mathcal{E}(\eta^{\Upsilon}_{r})dr\bigg\}
\end{equation*}
with $\eta^{\Upsilon}=\eta^{x,t,z,\Upsilon}$ being the auxiliary singular control generated by $\Upsilon$ at $(x,t,z)$. The martingale characterization of $f^{p,s}(x,t,z;\Upsilon)$ is given in the next result.
\begin{theorem}
Let $\Upsilon$ be an admissible auxiliary singular control law and $f^{p,s}(x,t,z;\Upsilon)$ be a given function. Then $f^{p,s}(x,t,z;\Upsilon)$ is the value function of $\Upsilon$ if and only if the process
\begin{equation*}
M^{f,p,s}:=\Bigg\{M^{f,p,s}_{r}:=f^{p,s}(X_{r},r,\eta^{\Upsilon}_{r};\Upsilon)+\int_{t}^{r}U(p,s,r')d\eta^{\Upsilon}_{r'}-\lambda\int_{t}^{r}e^{-\beta(r'-s)}\mathcal{E}(\eta^{\Upsilon}_{r'})dr'\Bigg\}
\end{equation*}
is an $\{\mathcal{F}_{r}\}_{r\in[t,+\infty)}$-martingale.
\end{theorem}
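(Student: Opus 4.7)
The plan is to prove both implications by exploiting the Markov property of the joint process $(X,\eta^{\Upsilon})$ together with direct manipulations of conditional expectations. The key structural observation is that under an admissible auxiliary singular control law $\Upsilon$, the remark following the definition of admissibility exhibits $\eta^{\Upsilon}$ as a pathwise functional of the Brownian path of $X$, so $(X,\eta^{\Upsilon})$ inherits the Markov property from $X$. This allows the defining conditional expectation of $f^{p,s}(\cdot;\Upsilon)$ to be shifted from the initial time $t$ to an arbitrary later time $r\ge t$.

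For the only-if direction, I would fix $r\ge t$ and rewrite $f^{p,s}(X_{r},r,\eta^{\Upsilon}_{r};\Upsilon)$ by substituting the defining formula and applying the Markov property at time $r$ to obtain
\[
f^{p,s}(X_{r},r,\eta^{\Upsilon}_{r};\Upsilon)=\mathbb{E}\!\left[\int_{r}^{+\infty}\!U(p,s,r')\,d\eta^{\Upsilon}_{r'}-\lambda\int_{r}^{+\infty}\!e^{-\beta(r'-s)}\mathcal{E}(\eta^{\Upsilon}_{r'})\,dr'\,\Big|\,\mathcal{F}_{r}\right].
\]
Adding the already-realized, $\mathcal{F}_{r}$-measurable integrals $\int_{t}^{r}U(p,s,r')d\eta^{\Upsilon}_{r'}-\lambda\int_{t}^{r}e^{-\beta(r'-s)}\mathcal{E}(\eta^{\Upsilon}_{r'})dr'$ inside the conditional expectation identifies $M^{f,p,s}_{r}$ with $\mathbb{E}[W\mid\mathcal{F}_{r}]$, where $W:=\int_{t}^{+\infty}U(p,s,r')d\eta^{\Upsilon}_{r'}-\lambda\int_{t}^{+\infty}e^{-\beta(r'-s)}\mathcal{E}(\eta^{\Upsilon}_{r'})dr'$ is a fixed integrable random variable. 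Any process of this form is automatically an $\{\mathcal{F}_{r}\}_{r\in[t,+\infty)}$-martingale.

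For the if direction, the martingale property together with $M^{f,p,s}_{t}=f^{p,s}(x,t,z;\Upsilon)$ yields $\mathbb{E}_{x,t,z}[M^{f,p,s}_{r}]=f^{p,s}(x,t,z;\Upsilon)$ for every $r\ge t$. I would then send $r\to+\infty$, invoke a transversality-type condition $\lim_{r\to+\infty}\mathbb{E}_{x,t,z}[f^{p,s}(X_{r},r,\eta^{\Upsilon}_{r};\Upsilon)]=0$, and apply monotone/dominated convergence on the accumulated pathwise integrals to recover the defining identity
\[
f^{p,s}(x,t,z;\Upsilon)=\mathbb{E}_{x,t,z}\!\left[\int_{t}^{+\infty}U(p,s,r)d\eta^{\Upsilon}_{r}-\lambda\int_{t}^{+\infty}e^{-\beta(r-s)}\mathcal{E}(\eta^{\Upsilon}_{r})dr\right].
\]

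The principal obstacle is the infinite-horizon integrability needed to take the conditional expectations and to justify the $r\to+\infty$ interchange. Although $\eta^{\Upsilon}$ is non-decreasing and bounded by $1$, the integrand $U$ grows exponentially in its first argument (since $\Phi$ grows like $e^{bx}$), so the argument must leverage the exponential discounting through the standing hypothesis $\beta>\mu a+\tfrac{1}{2}\sigma^{2}a^{2}$ to get $L^{1}$ bounds on $W$, together with an implicit admissibility/growth assumption on the candidate $f^{p,s}$ analogous to conditions (d)--(e) of the preceding verification theorem. Under such a class of candidate value functions, the martingale characterization becomes an exact equivalence.
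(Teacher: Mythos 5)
The paper states this theorem without proof (it is the standard policy-evaluation martingale characterization, in the spirit of Step 1 of the verification theorem in Section 4), so there is no in-paper argument to compare against; your proposal supplies essentially the argument the authors implicitly rely on, and it is sound in structure: for the ``only if'' direction, the explicit running-supremum representation $\eta^{x,t,z,\Upsilon}_{r}=z+\sup_{s'\in[t,r]}\inf\{a\ge 0\mid (X_{s'},s',z+a)\in\overline{\mathcal{W}^{\Upsilon}}\}$ gives the flow property you need, so that $M^{f,p,s}_{r}=\mathbb{E}[W\mid\mathcal{F}_{r}]$ is a closed (Doob) martingale; for the ``if'' direction, the martingale identity plus a transversality condition and dominated convergence recovers the probabilistic representation. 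Two corrections are worth making. First, your integrability worry is misplaced: in the outer problem the integrand is $U(p,s,r')=\Psi(p,r'-s)$ with the \emph{fixed} first argument $p$, and under $\beta>\mu a+\tfrac12\sigma^{2}a^{2}$ the function $q\mapsto\Psi(p,q)$ is bounded, while $\eta^{\Upsilon}$ has total variation at most $1$ and $\mathcal{E}(\cdot)\in[0,1]$ is discounted; hence the total payoff $W$ is in fact bounded and no growth estimate on $X$ is needed there. The only place a genuine extra hypothesis enters is exactly where you put it, namely the transversality condition $\lim_{r\to\infty}\mathbb{E}_{x,t,z}[f^{p,s}(X_{r},r,\eta^{\Upsilon}_{r};\Upsilon)]=0$ for the candidate $f$ in the ``if'' direction; without some such condition the stated equivalence fails (one can add to the true value function a bounded-variation-free harmonic term such as $\varepsilon e^{\gamma x-(\mu\gamma+\frac12\sigma^{2}\gamma^{2})t}$, which preserves the martingale property but not the representation), so flagging it is correct rather than optional.

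Second, a small point you gloss over (as does the theorem statement): if the initial point satisfies $(x,t,z)\notin\overline{\mathcal{W}^{\Upsilon}}$, then $\eta^{\Upsilon}$ jumps at time $t$ and $M^{f,p,s}_{t}=f^{p,s}(x,t,\eta^{\Upsilon}_{t};\Upsilon)+U(p,s,t)(\eta^{\Upsilon}_{t}-z)$, not $f^{p,s}(x,t,z;\Upsilon)$; moreover the martingale only ever evaluates $f^{p,s}$ at post-jump states. So the ``if'' direction literally pins down $f^{p,s}$ on $\overline{\mathcal{W}^{\Upsilon}}$ (equivalently, at initial points with no instantaneous jump), and one recovers it on the action region by the linear jump relation $f^{p,s}(x,t,z)=f^{p,s}(x,t,\eta_{t})+U(p,s,t)(\eta_{t}-z)$, which should be stated as part of the characterization or imposed on the candidate. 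With these two adjustments your proof is complete and matches the intended (omitted) argument.
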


Observe that we have the explicit expressions for $\Phi(x)$ and  $U(p,s,t)$, which motivate us to consider a shared set of parameters such that $\Phi(x)$ can also be learned through policy evaluation on $U(p,s,t)$. 

\subsection{Policy iteration and q-learning}
Note that we are handling a pair of singular control laws $(\Xi,\Upsilon)$.  For $\Xi$, there exists a policy iteration scheme $\Xi\rightarrow \Xi'$ given by
\begin{equation}
\label{xipie}
W^{\Xi'}:= int\big(\{x|q^{\Phi}_{0}(x;\Xi)\ge 0,q^{\Phi}_{1}(x;\Xi)\le 0\}\big).
\end{equation}
where
\begin{align*}
&q^{\Phi}_{0}(x;\Xi):=c-\Phi'(x;\Xi),\\
&q^{\Phi}_{1}(x;\Xi):=e^{ax}-\beta\Phi(x;\Xi)+\mu \Phi'(x;\Xi)+\frac{1}{2}\sigma^{2}\Phi''(x;\Xi).
\end{align*}
Unlike $\Xi$, which is essentially a one-dimensional boundary, the boundary of $\Upsilon$ is a surface in the state-time-control space, which makes its policy iteration significantly more challenging. The outer-layer equilibrium strategy can be obtained by solving the extended HJB system (\ref{v})$\sim$(\ref{fp}) that involves the unknown parameters $\mu,\sigma$, and the inner function $U$. Therefore, if an explicit or numerical solution to this system is available, the policy iteration for the outer strategy $\Upsilon$ can be directly obtained by iterating $\mu,\sigma$ and the inner function $U$.

In cases where a direct solution is not available, we can construct an iteration $\Upsilon\rightarrow\Upsilon'$ for the outer strategy based on (\ref{v})$\sim$(\ref{fp}) as follows:
\begin{equation*}
\mathcal{W}^{\Upsilon'}: =int\Big(\big\{(x, t, z)\big|-\lambda\mathcal{E}(z)+(\mathcal{A}f^{x,t})(x,t,z;\Upsilon)\le 0, \Phi(x; \Xi')+V_{z}(x,t,z;\Upsilon)\ge 0\big\}\Big).
\end{equation*}
However, the convergence of the above policy iteration for the outer time-inconsistent problem is still a challenging problem. Alternatively, we introduce the following q-learning theorem, relying on some constraints for the parameterization, to directly learn the equilibrium value function for the outer time-inconsistent problem.
\begin{theorem}[Martingale characterization for the equilibrium value function and $q$-functions]
\label{qforequi}
Given functions $\hat{f}(x,t,z,p,s):=\hat{f}^{p,s}(x,t,z)\in C^{2,1,1}(\mathcal{R})$ and continuous functions $\hat{q}_{0}(x,t,z,p,s)$ and $\hat{q}_{1}(x,t,z,p,s)$, suppose that for any $\eta\in\mathcal{D}_{t,z}$, $\eta'\in\mathcal{D}'_{t,z}$, and any $T>t$, there exists $h_{0}>0$ such that the following regularity conditions hold:
\begin{align*}
&\mathbb{E}_{x,t,z}\int_{t}^{T}\big[\hat{f}^{p,s}_{x}(X_{r},r,\eta'_{r})\big]^{2}dr+\mathbb{E}_{x,t,z}\int_{t}^{T}\big[\hat{f}_{x}(X_{r},r,\eta'_{r},X_{r},r)\big]^{2}dr<+\infty,\\
&\mathbb{E}_{x,t,z}\sup\limits_{\substack{r\in(t,t+h_{0})\\u\in[0,\Delta\eta'_{r}]}}\!\big|\mathcal{A}\hat{f}^{x,t}(X_{r},r,\eta'_{r-}\!\!+u)\big|+\mathbb{E}_{x,t,z}\sup\limits_{\substack{r\in(t,t+h_{0})\\u\in[0,\Delta\eta'_{r}]}}\!\big|\hat{f}^{x,t}_{z}(X_{r},r,\eta'_{r-}\!\!+u)\big|<+\infty,
\end{align*}
and the constraint conditions
\begin{align*}
&\lim\limits_{T\rightarrow+\infty}\mathbb{E}_{x,t,z}\hat{f}(X_{T},T,\eta_{T},p,s)=0, \quad \forall (x,t,z)\in\mathcal{R},\ (p,s)\in\mathbb{R}\times[0,t],\\
&\min\big\{\hat{q}_{0}(x,t,z,x,t),\hat{q}_{1}(x,t,z,x,t)\big\}=0,\quad\forall (x,t,z)\in\mathcal{R},\\
&\hat{q}_{0}(x,t,z,p,s)\mathbf{1}_{\{\hat{q}_{1}(x,t,z,x,t)>0\}}=0, \quad\forall (x,t,z)\in\mathcal{R},\ (p,s)\in\mathbb{R}\times[0,t],\\
&\hat{q}_{1}(x,t,z,p,s)\mathbf{1}_{\{\hat{q}_{0}(x,t,z,x,t)>0\}}=0, \quad\forall (x,t,z)\in\mathcal{R},\ (p,s)\in\mathbb{R}\times[0,t],
\end{align*}
and the outer strategy $\hat{\Upsilon}$ defined by the waiting region $\mathcal{W}^{\hat{\Upsilon}}:=\big\{(x,t,z)\;\big|\;\hat{q}_{0}(x,t,z,p,s)>0\big\}$ is admissible. Then the following assertions (1) and (2) are equivalent:\\
(1) $\hat{V}(x,t,z):=\hat{f}(x,t,z,x,t)$ is the equilibrium value function, and
\begin{align}
&\hat{q}_{0}(x,t,z,p,s)=\Psi(p,t-s)+\hat{f}^{p,s}_{z}(x,t,z),\label{outer_q0}\\
&\hat{q}_{1}(x,t,z,p,s)=-\lambda e^{-\beta(t-s)}\mathcal{E}(z)+(\mathcal{A}\hat{f}^{p,s})(x,t,z).\label{outer_q1}
\end{align}
(2) For any admissible $\Upsilon$ and any $(x,t,z)\in\mathcal{R}$ and $(p,s)\in\mathbb{R}\times[0,t]$, the stochastic process $M^{\hat{f},\hat{q}_{0},\hat{q}_{1},p,s,\Upsilon}:=M^{\Upsilon}$ is a martingale with respect to $\{\mathcal{F}^{B}_{r}\}_{r\in[t,+\infty)}$, where
\begin{align*}
M^{\Upsilon}_{r}:=&\hat{f}^{p,s}(X_{r},r,\eta^{\Upsilon}_{r})+\int_{t}^{r}\big[\Psi(p,r'-s)-\hat{q}_{0}(X_{r'},r',\eta^{\Upsilon}_{r'},p,s)\big]d(\eta^{\Upsilon})^{c}_{r'}\\
&+\sum\limits_{r'\in[t,r]}\int_{0}^{\Delta\eta^{\Upsilon}_{r'}}\big[\Psi(p,r'-s)-\hat{q}_{0}(X_{r'},r',\eta^{\Upsilon}_{r'-}+u,p,s)\big]du\\
&+\int_{t}^{r}\big[-\lambda e^{-\beta(r'-s)}\mathcal{E}(\eta^{\Upsilon}_{r'})-\hat{q}_{1}(X_{r'},r',\eta^{\Upsilon}_{r'},p,s)\big]dr',\quad\forall r\in[t,+\infty).
\end{align*}
\end{theorem}
\begin{proof}
The proof is similar to Theorem 3.6 in \cite{liang2025reinforcement}.
\end{proof}

With the martingale property established in the above theorem, one can design $q$-learning algorithms that directly learn the equilibrium value function, the $q$-functions, and the equilibrium strategies by minimizing the squared martingale loss or based on the martingale orthogonality conditions.

\section{RL Algorithms and Numerical Implementations}\label{sec:num}

In this section, we numerically implement the reinforcement learning algorithm in the simulation experiment.

In the first step, we design a uniform parameterization for the value functions to prepare for policy evaluation.
\begin{equation*}
\theta_{1}=a,\quad\theta_{2}=b,\quad\theta_{3}=C_{a}.
\end{equation*}
We do policy evaluation on $U$ for the randomized approach, while on $\Phi$ for the benchmark non-randomized approach. These two functions are parameterized as follows.
\begin{align*}
&\!\Phi^{\theta}(x;\bar{x})\!=\!\left\{
\begin{array}{l}
\theta_{3}e^{\theta_{1}x}+C_{b}(\theta;\bar{x})e^{\theta_{2}x},x\le\bar{x},\\
c(x-\bar{x})+C_{v}(\theta;\bar{x}),x>\bar{x},
\end{array}
\right.,\\
&\!U^{\theta}(x,t,r;\bar{x})\!=\!\left\{
\begin{array}{l}
\theta_{3}e^{\theta_{1}x}+e^{-\beta(r-t)}\int_{-\infty}^{\bar{x}}\frac{1}{\sqrt{2\pi\sigma^{2}(\theta)(r-t)}}e^{-\frac{[y-x-\mu(\theta)(r-t)]^{2}}{2\sigma^{2}(\theta)(r-t)}}C_{b}(\theta;\bar{x})e^{\theta_{2}y}dy\\+e^{-\beta(r-t)}\int_{\bar{x}}^{\infty}\frac{1}{\sqrt{2\pi\sigma^{2}(\theta)(r-t)}}e^{-\frac{[y-x-\mu(\theta)(r-t)]^{2}}{2\sigma^{2}(\theta)(r-t)}}\big[c(y-\bar{x})+C_{v}(\theta;\bar{x})-\theta_{3}e^{\theta_{1}y}\big]dy,t\in[0,r),\\
\left\{
\begin{array}{l}
\theta_{3}e^{\theta_{1}x}+C_{b}(\theta;\bar{x})e^{\theta_{2}x},x\le\bar{x},\\
c(x-\bar{x})+C_{v}(\theta;\bar{x}),x>\bar{x},
\end{array}
\right.,t\in[r,+\infty)
\end{array}
\right.
\end{align*}
where
\begin{align*}
&C_{b}(\theta;\bar{x}):=\frac{c-\theta_{1}\theta_{3}e^{\theta_{1}\bar{x}}}{\theta_{2}e^{\theta_{2}\bar{x}}},\quad
C_{v}(\theta;\bar{x}):=\frac{c+(\theta_{2}-\theta_{1})\theta_{3}e^{\theta_{1}\bar{x}}}{\theta_{2}},\\
&\mu(\theta)=\frac{\beta(\theta_{2}^{2}-\theta_{1}^{2})-\frac{\theta_{2}^{2}}{\theta_{3}}}{\theta_{1}\theta_{2}(\theta_{2}-\theta_{1})},\quad
\sigma(\theta)=\sqrt{\frac{2\big[\frac{\theta_{2}}{\theta_{3}}-\beta(\theta_{2}-\theta_{1})\big]}{\theta_{1}\theta_{2}(\theta_{2}-\theta_{1})}}.
\end{align*}

 \begin{remark}
 It is not appropriate to parameterize $\Phi$ using the expression of the optimizer, which does not depend on the given boundary $\bar{x}$ but is additionally $C^{2}$ at the boundary. The boundary in this parameterization is a fixed point for the policy iteration scheme. It is common to do the policy iteration in an appropriate subset of admissible strategies, and it may result in faster convergence. The parameterization should fit the structure of the value function for this subset of admissible strategies. In an extreme case, the parameterization fits the singleton of the optimal case, and the policy iteration no longer leads to different strategies and is thus not appropriate.
 \end{remark}

 \begin{remark}
 The parameterization for the singular control is different from the regular control in that the parameterized value function also depends on the current boundary $\bar{x}$.
 \end{remark}

For the policy evaluation, we use the offline martingale loss algorithm for both $U$ and $\Phi$. To tackle the infinite-time horizon, we impose a truncation at a sufficiently large terminal time $T$. Noting the fact that 
\begin{align*}
&\lim\limits_{T\rightarrow+\infty}e^{-\beta T}U(X^{\Xi_{\bar{x}},r}_{T},T,r;\bar{x})=0, a.s.,\\
&\lim\limits_{T\rightarrow+\infty}e^{-\beta T}\Phi(X^{\Xi_{\bar{x}}}_{T};\bar{x})=0,a.s..
\end{align*}
We can approximate the above terms to zero and obtain the following policy evaluation scheme to iterate $\theta$ for $\Phi$.
\begin{equation}
\theta\leftarrow \theta+\alpha l(m)\Delta t\sum\limits_{n=0}^{N-1}\Big[-e^{-\beta t_{n}}\Phi^{\theta}(X^{\Xi_{\bar{x}}}_{t_{n}};\bar{x})+\sum\limits_{k=n}^{N-1}\big[H_{k}+c^{c}_{k}\big]\Big]\frac{\partial}{\partial \theta}\Phi^{\theta}(X^{\Xi_{\bar{x}}}_{t_{n}};\bar{x}),\label{PEphi}
\end{equation}
where $\alpha$ is the initial learning rate, $l(\cdot)$ is the learning rate schedule function, $H_{k}$ is the running cost in $(t_{k},t_{k+1})$ with interpretation $H_{k}=\int_{t_{k}}^{t_{k+1}}e^{-\beta r}e^{aX^{\Xi_{\bar{x}}}_{r}}dr$, and $c^{c}_{k}$ is the control cost in $(t_{k},t_{k+1})$ given by $c^{c}_{k}=\int_{t_{k}}^{t_{k+1}-}e^{-\beta r}cd\xi^{\Xi_{\bar{x}}}_{r}$.

The policy evaluation for $U$ is similar but more complex. In the randomized singular control approach, we have an activation time $\tau^{\Upsilon_{\bar{x}}}$ generated by the auxiliary singular control law $\Upsilon_{\bar{x}}$ with $W^{\Upsilon_{\bar{x}}}:=\{(x,z)|z>\Gamma(x;\theta,\bar{x})\}$, where we conjecture $\Gamma(x;\theta,\bar{x})$ to be the only boundary for the outer problem. In the algorithm, suppose the grids are $t_{n}=T\frac{n}{N},n=0,1,\cdots,N$, then
\begin{equation*}
\eta^{\Upsilon_{\bar{x}}}_{t_{n}}=\sup\limits_{1\le k\le n}\Gamma(X_{t_{k}}; \theta,\bar{x})
\end{equation*}
where $X$ is the simulated uncontrolled state process. And $\tau^{\Upsilon_{\bar{x}}}$ can be determined by
\begin{equation}
\label{tauupsilon}
\tau^{\Upsilon_{\bar{x}}}=\inf\{t_{k},1\le k\le N|\eta^{\Upsilon_{\bar{x}}}_{t_{k}}\ge Z\},\quad Z\sim Uniform[0,1].
\end{equation}
to make the total probability of activation before time $t_{k}$ be $\eta^{\Upsilon_{\bar{x}}}_{t_{k}}$ for any $k$.

If the infimum in (\ref{tauupsilon}) exists, we iterate
\begin{equation}
\theta\leftarrow \theta+\alpha l(m)\Delta t\sum\limits_{n=0}^{N-1}\Big[-e^{-\beta t_{n}}U^{\theta}(X^{\Xi_{\bar{x}},\tau^{\Upsilon_{\bar{x}}}}_{t_{n}},t_{n},\tau^{\Upsilon_{\bar{x}}};\bar{x})+\sum\limits_{k=n}^{N-1}\big[H_{k}+c^{c}_{k}\big]\Big]\frac{\partial}{\partial \theta}U^{\theta}(X^{\Xi_{\bar{x}},\tau^{\Upsilon_{\bar{x}}}}_{t_{n}},t_{n},\tau^{\Upsilon_{\bar{x}}};\bar{x}).\label{PEu1}
\end{equation}

For the case that the infimum in (\ref{tauupsilon}) does not exist, we approximate this case by the case that $\tau^{\Upsilon_{\bar{x}}}=+\infty$. Now the value function is just the uncontrolled value function
\begin{equation*}
U^{\infty}(x,t;\bar{x}):=C_{a}e^{ax},\quad \text{and}\quad
U^{\infty,\theta}(x,t):=\theta_{3}e^{\theta_{1}x}
\end{equation*}
and we iterate 
\begin{equation}
\theta\leftarrow \theta+\alpha l(m)\Delta t\sum\limits_{n=0}^{N-1}\Big[-e^{-\beta t_{n}}U^{\infty,\theta}(X_{t_{n}},t_{n})+\sum\limits_{k=n}^{N-1}H_{k}\Big]\frac{\partial}{\partial \theta}U^{\infty,\theta}(X_{t_{n}},t_{n}).\label{PEu2}
\end{equation}

\begin{algorithm}[H]
\caption{Non-randomized environment simulator}
\label{nonrandsi}
\renewcommand{\algorithmicrequire}{\textbf{INPUT:}}
\begin{algorithmic}
\REQUIRE Grids $\{t_{n}\}_{0\le n\le N}$, initial $(X_{t_{0}-},t_{0},\xi_{t_{0}-})$, and strategy $\Xi_{\bar{x}}$.
\FOR{$n=0,1,\cdots,N-1$}
    \STATE Calculate immediate jump $\Delta\xi_{t_{n}}=(X_{t_{n}-}-\bar{x})^{+}$ and set $\xi_{t_{n}}=\xi_{t_{n}-}+\Delta\xi_{t_{n}}$, $X_{t_{n}}=X_{t_{n}-}-\Delta\xi_{t_{n}}$.
    \STATE Simulate an uncontrolled increase $I_{\Delta t}\sim N(\mu\Delta t,\sigma^{2}\Delta t)$ and set
    \begin{align*}
    &\xi_{t_{n+1}-}=\xi_{t_{n}}+(X_{t_{n}}+I_{\Delta t}-\bar{x})^{+},\\
    &X_{t_{n+1}-}=X_{t_{n}}-(X_{t_{n}}+I_{\Delta t}-\bar{x})^{+}.
    \end{align*}
    
    Calculate the approximated running cost and control cost
    \begin{align*}
    &H_{t_{n}}=e^{-\beta t_{n}}e^{aX_{t_{n}}}\Delta t,\\
    &c^{c}_{t_{n}}=e^{-\beta t_{n}}c(X_{t_{n}}+I_{\Delta t}-\bar{x})^{+}.
    \end{align*}
\ENDFOR
\RETURN The sequences of $(t_{n})$-triples $\{(X_{t_{n}},t_{n},\xi_{t_{n}})\}_{1\le n\le N-1}$, $(t_{n+1}-)$-triples $\{(X_{t_{n+1}-},t_{n+1},\xi_{t_{n+1}-})\}_{1\le n\le N-1}$, running costs $\{H_{t_{n}}\}_{1\le n\le N-1}$ and control costs $\{c^{c}_{t_{n}}\}_{1\le n\le N-1}$.
\end{algorithmic}
\end{algorithm}

\begin{algorithm}[h]
\caption{Randomized environment simulator}
\label{randsi}
\renewcommand{\algorithmicrequire}{\textbf{INPUT:}}
\begin{algorithmic}
\REQUIRE Grids $\{t_{n}\}_{0\le n\le N}$, initial $(X_{t_{0}-},t_{0},\xi_{t_{0}-},\eta_{t_{0}-})$, and strategy $(\Xi_{\bar{x}},\Upsilon_{\bar{x}})$.

\STATE Initialize activating time $\tau=\infty$, sample a random variable $Z\sim Uniform[0,1]$.

\FOR{$n=0,1,\cdots,N-1$}
    \STATE Calculate immediate auxiliary jump $\Delta\eta_{t_{n}}=\big(\Gamma(X_{t_{n-}}; \theta,\bar{x})-\eta_{t_{n-}}\big)^{+}$. Set 
    \begin{equation*}
    \eta_{t_{n+1}-}=\eta_{t_{n}}=\eta_{t_{n}-}+\Delta \eta_{t_{n}}.
    \end{equation*}

    \IF{$\tau=\infty$}
        \IF{$\eta_{t_{n}}>Z$}
            \STATE Set activation time $\tau=t_{n}$.
        \ELSE
            \STATE Set $\xi_{t_{n+1}-}=\xi_{t_{n}}=\xi_{t_{n}-}, X_{t_{n}}=X_{t_{n}-}$. Simulate an uncontrolled increase $I_{\Delta t}\sim N(\mu\Delta t,\sigma^{2}\Delta t)$ in $[t_{n},t_{n+1})$ and set $X_{t_{n+1}-}=X_{t_{n}}+I_{\Delta t}$.
            \STATE Calculate approximate running cost $H_{t_{n}}=e^{-\beta t_{n}}e^{aX_{t_{n}}}\Delta t$ and control cost $c^{c}_{t_{n}}=0$.
        \ENDIF
    \ENDIF
    \IF{$\tau<\infty$}
        \STATE Feed $(X_{t_{n}-},t_{n},\xi_{t_{n}-})$ and $\Xi_{\bar{x}}$ into the environment simulator \ref{nonrandsi} to obtain the $(t_{n})$-triple $(X_{t_{n}},t_{n},\xi_{t_{n}})$, $(t_{n+1}-)$-triple $(X_{t_{n+1}-},t_{n+1},\xi_{t_{n+1}-})$, running cost $H_{t_{n}}$ and control cost $c^{c}_{t_{n}}$.
    \ENDIF
\ENDFOR
\RETURN The activating time $\tau$, the sequences of $(t_{n})$-quadruples $\{(X_{t_{n}},t_{n},\xi_{t_{n}},\eta_{t_{n}})\}_{0\le n\le N-1}$, $(t_{n+1}-)$-quadruples $\{(X_{t_{n+1}-},t_{n+1},\xi_{t_{n+1}-},\eta_{t_{n+1}-})\}_{0\le n\le N-1}$, running costs $\{H_{t_{n}}\}_{0\le n\le N-1}$ and control costs $\{c^{c}_{t_{n}}\}_{0\le n\le N-1}$.
\end{algorithmic}
\end{algorithm}

The environment simulators for generating samples for offline policy evaluation are summarized in Algorithm \ref{nonrandsi} and Algorithm \ref{randsi}. Algorithm \ref{nonrandsi} is for the non-randomized setting, while Algorithm \ref{randsi} is for the randomized setting. Specifically, for each small interval $[t_{n},t_{n+1})$, the non-randomized simulator generates the singular control and state process at $(X_{t_{n}-},t_{n},\xi_{t_{n}-})$ under $\Xi_{\bar{x}}$. On the other hand, the randomized simulator acts in two steps. First, it calculates whether the agent should activate at time $t_{n}$ according to a random variable $Z$. Second, if the agent should activate at $t_{n}$, then the simulator generates the singular control and state process at $(X_{t_{n}-},t_{n},\xi_{t_{n}-})$ under $\Xi_{\bar{x}}$; otherwise, the simulator keeps the control $\xi$ constant and generate the state process according to the uncontrolled dynamics. The running cost and control cost in $(t_{n},t_{n+1})$, which are expressed as integrals in $(t_{n},t_{n+1})$, are approximated by the values of integrands at $t_{n}$ multiplied by $\Delta t$ or $(\xi_{t_{n+1}-}-\xi_{t_{n}})$.

\begin{remark}
For the randomized approach, an alternative choice is to learn $f$ instead of $U$. The iteration by the martingality w.r.t $f$ is given by
\begin{align*}
\theta\leftarrow\theta+\alpha\sum\limits_{n=0}^{N-1}&\Big[-(f^{x_{0},0})^{\theta}(X_{t_{n}},t_{n},\eta_{t_{n}};\bar{x})+\sum\limits_{k=n}^{N-1}U^{\theta}(x_{0},0,\eta_{t_{n}};\bar{x})\Delta\eta_{t_{n}}-\lambda\sum\limits_{k=n}^{N-1}e^{-\beta t_{n}}\mathcal{E}(\eta_{t_{n}})\Delta t\Big]\\&\cdot\Big[\frac{\partial}{\partial\theta}(f^{x_{0},0})^{\theta}(X_{t_{n}},t_{n},\eta_{t_{n}};\bar{x})-\sum\limits_{k=n}^{N-1}\frac{\partial}{\partial\theta}U^{\theta}(x_{0},0,\eta_{t_{n}};\bar{x})\Delta\eta_{t_{n}}\Big].
\end{align*}
However, the above iteration rule requires the parameterization of both $U$ and $f$, which complicates the learning procedure.
\end{remark}

We highlight the following facts regarding the policy evaluation:

(i) In each step, the jump provides no information because $c$ is known. Hence, we only apply PE in each $(t_{n},t_{n+1})$ interval in (\ref{PEphi}) and (\ref{PEu1})$\sim$(\ref{PEu2}). 

(ii) The summation of gradient term in (\ref{PEphi}) and (\ref{PEu1})$\sim$(\ref{PEu2}) over $n$ can be extremely large due to the cumulation effect. We impose a gradient clipping with a bound $M_{gc}$ on the summation term.

(iii) The parameters $\theta$ are assumed by the agent to satisfy conditions $\mu(\theta)>0$, $\sigma(\theta)>0$ and $\beta-\mu(\theta)\theta_{1}-\frac{1}{2}\sigma^{2}(\theta)\theta_{1}^{2}>0$ (The assumption $\mu(\theta)>0$ can be generalized, see Remark \ref{remarkmu}). Equivalently, it is assumed to be  satisfied that
\begin{equation*}
\theta_{1}>0,\quad \theta_{3}>\frac{1}{\beta},\quad \theta_{2}\in\big(\sqrt\frac{\beta}{\beta-\frac{1}{\theta_{3}}}\theta_{1},\frac{\beta}{\beta-\frac{1}{\theta_{3}}}\theta_{1}\big).
\end{equation*}

Note that (\ref{PEu2}) only iterates $\theta_{1}$ and $\theta_{3}$, we impose boundary clipping for $\theta_{1}$ and $\theta_{3}$ and separately iterate $\theta_{2}$, i.e.,
\begin{align}
&\theta_{1}\leftarrow\max\{\theta_{1},\delta_{bc}\},\label{adj01}\\
&\theta_{3}\leftarrow\max\{\theta_{3},\frac{1}{\beta}+\delta_{bc}\},\label{adj02}\\
&\theta_{2}\leftarrow\frac{\sqrt\frac{\beta}{\beta-\frac{1}{\theta_{3}}}+\frac{\beta}{\beta-\frac{1}{\theta_{3}}}}{2}\theta_{1}.\label{adj03}
\end{align}

\begin{remark}
\label{remarkmu}
The relation $\theta_{2}>\sqrt{\frac{\beta}{\beta-\frac{1}{\theta_{3}}}}\theta_{1}$ follows from the assumption that $\mu(\theta)>0$, which is for simplicity and faster convergence. More generally, we may assume the unknown $\mu$ to have a lower bound and derive similar relations. The assumption $\mu>0$ here is just for simplicity. Even if $\mu$ is assumed to have no known lower bound, the agent can use the clipping $\theta_{2}\in(-\infty,\frac{\beta}{\beta-\frac{1}{\theta_{3}}}\theta_{1})$ and subjectively design a rule to replace the updating rule $\theta_{2}\leftarrow\frac{\sqrt\frac{\beta}{\beta-\frac{1}{\theta_{3}}}+\frac{\beta}{\beta-\frac{1}{\theta_{3}}}}{2}\theta_{1}$.
\end{remark}

For the other case, we only do boundary clipping
\begin{align}
&\theta_{1}\leftarrow\max\{\theta_{1},\delta_{bc}\},\label{adj11}\\
&\theta_{3}\leftarrow\max\{\theta_{3},\frac{1}{\beta}+\delta_{bc}\},\label{adj12}\\
&\theta_{2}\leftarrow \min\Big\{\frac{\beta}{\beta-\frac{1}{\theta_{3}}}\theta_{1}-\delta_{bc},\max\big\{\theta_{2},\sqrt\frac{\beta}{\beta-\frac{1}{\theta_{3}}}\theta_{1}+\delta_{bc}\big\}\Big\}.\label{adj13}
\end{align}

The next step is to learn the boundary $\Gamma(\cdot;\theta,\bar{x})$ in the outer problem and implement the policy iteration for the inner problem. We employ a neural network to learn the equilibrium boundary of the outer problem, with the optimization objective being to minimize the squared martingale loss of the martingale in Theorem \ref{qforequi}. Following the proof of Proposition 4.1 in \cite{liang2025reinforcement}, the policy iteration $\bar{x}\rightarrow\bar{x}'$ can be implemented in Algorithm \ref{PIin}.

\begin{algorithm}[H]
\caption{Policy iteration for $\bar{x}$}
\label{PIin}
\renewcommand{\algorithmicrequire}{\textbf{INPUT:}}
\begin{algorithmic}
\REQUIRE Current parameters $\theta$, current boundary $\bar{x}$, boundary iteration rate $\alpha_{pi}$.
\IF{$(\theta_{2}-\theta_{1})\theta_{1}\theta_{3}e^{\theta_{1}\bar{x}}-c\theta_{2}\le 0$}
    \STATE Calculate $\bar{x}'$ as the root of 
    \begin{equation*}
e^{\theta_{1}x}-\beta c(x-\bar{x})-\beta\frac{c+(\theta_{2}-\theta_{1})\theta_{3}e^{\theta_{1}\bar{x}}}{\theta_{2}}+\mu(\theta)c=0
\end{equation*}
w.r.t variable $x$ in $[\bar{x},+\infty)$.
\ELSE
    \STATE Calculate $\bar{x}'$ as the root of \begin{equation*}
c-\theta_{1}\theta_{3}e^{\theta_{1}x}-ce^{\theta_{2}(x-\bar{x})}+\theta_{1}\theta_{3}e^{\theta_{2}x-(\theta_{2}-\theta_{1})\bar{x}}=0
\end{equation*}
w.r.t variable $x$ in $(-\infty,\bar{x})$.
\ENDIF
\STATE Scale the change by letting
\begin{equation*}
\bar{x}'\leftarrow\bar{x}+\alpha_{pi}(\bar{x}'-\bar{x}).
\end{equation*}
\RETURN the updated boundary $\bar{x}'$
\end{algorithmic}
\end{algorithm}

Now we can integrate the policy evaluation, policy iteration, and q-learning to design an overall actor-critic RL algorithms. The benchmark algorithm using non-randomized singular control is given in Algorithm \ref{benchmark0}. Accordingly, the algorithm using randomized singular control is shown in Algorithm \ref{rand0}.

\begin{remark}
It is worth noting that the benchmark Algorithm \ref{benchmark0} is different from that in \cite{liang2025reinforcement}. Algorithm \ref{benchmark0} here is actor-critic that involves both policy evaluation and policy iteration, while the one in \cite{liang2025reinforcement} is an optimal q-learning algorithm without policy iteration. Such a choice of benchmark algorithm is for better comparison with the randomized algorithm, where two algorithms have the same policy iteration but differ in policy evaluation. That is, the benchmark algorithm evaluates via $\Phi$ and the randomized algorithm evaluates via $U$. Also, we highlight that for better comparison, the randomized algorithm directly learns the target value function through the inner problem, and the outer problem only determines the action of the agent, which is different from the standard RL approach to approximate the target value function by the entropy regularized value function.
\end{remark}
 
In our numerical experiment, the true model coefficients are set as $c=1,\ \beta=0.1,\ a=0.1, \mu=0.25,\ \sigma=1$. We choose the truncation time $T=100$, time step $\Delta t=0.02$ and train $M=500$ episodes with initial guess parameters $\theta^{guess}_{1}=0.15, \theta^{guess}_{2}=0.4, \theta^{guess}_{3}=15$, initial guess boundary $\bar{x}^{guess}=-2.5$ and initial state $x_{0}=1$. The initial learning rates are $[0.1,0.1,1]$ for $\theta_{i},i=1,2,3$, and the learning rate schedule functions are all $l(m):=1.01^{-m}$ for $\theta_{i},i=1,2,3$. The boundary iteration rate is $\alpha_{pi}=0.5$, the gradient clipping bounds are $M_{gc}=[1,1,10]$ for the summation of gradient terms for $\theta_{i}, i=1,2,3$, and the boundary clipping threshold is $\delta_{bc}=0.001$.

\begin{algorithm}[h]
\caption{benchmark non-randomized offline actor-critic algorithm}
\label{benchmark0}
\renewcommand{\algorithmicrequire}{\textbf{INPUT:}}
\begin{algorithmic}
\REQUIRE
Initial $x_{0}$, a cut-off terminal time $T>0$, number of episodes $M$, number of mesh grids $N$, proper parameterization $\Phi^{\theta}(\cdot;\cdot)$, initial learning rates $\alpha$ and a learning rate schedule function $l(\cdot)$. Gradient clipping bound $M_{gc}$, boundary clipping threshold $\delta_{gc}$ and iteration rate $\alpha_{pi}$.\\
\STATE Initialize $\theta$ by a guessed vector value, and initialize boundary $\bar{x}$ by a guessed value. Obtain time step size $\Delta t:=\frac{T}{N}$ and $t_{n}:=n\frac{T}{N}$ for $n=0,1,\cdots,N$.
\FOR {episode $m=1,2,\cdots,M$}
\STATE Feed grids $\{t_{n}\}_{0\le n\le N}$, initial $(X_{t_{0}-},t_{0},\xi_{t_{0}-})=(x_{0},0,0)$, and strategy $\Xi_{\bar{x}}$ defined by $W^{\Xi_{\bar{x}}}=\{x|x<\bar{x}\}$ into the non-randomized environment simulator \ref{nonrandsi} and obtain the output data.

\STATE Apply the martingale loss algorithm to update the parameter $\theta$ by (\ref{PEphi}) with gradient clipping in $[-M_{gc},M_{gc}]$. Then apply boundary clipping (\ref{adj11})$\sim$(\ref{adj13}) to adjust the updated parameters.

\STATE Apply policy iteration \ref{PIin} to update $\bar{x}\leftarrow\bar{x}'$.

\ENDFOR
\end{algorithmic}
\end{algorithm}

\begin{algorithm}[h]
\caption{Randomized offline actor-critic algorithm}
\label{rand0}
\renewcommand{\algorithmicrequire}{\textbf{INPUT:}}
\begin{algorithmic}
\REQUIRE
Initial $x_{0}$, a cut-off terminal time $T>0$, number of episodes $M$, number of mesh grids $N$, proper parameterization $U^{\theta}(\cdot,\cdot,\tau;\cdot)$ for any $\tau$ and $U^{\infty,\theta}(\cdot,\cdot;\cdot)$ for $\tau=\infty$, initial learning rates $\alpha$ and a learning rate schedule function $l(\cdot)$. Gradient clipping bound $M_{gc}$, boundary clipping threshold $\delta_{gc}$, iteration rate $\alpha_{pi}$, temperature $\lambda$. For the outer network, $L^{pre}$ epochs for pretraining, $L^{train}$ epochs for outer network training in every $m^{gap}$ episodes of inner learning, \\
\STATE Initialize $\theta$ by a guessed vector value, and initialize boundary $\bar{x}$ by a guessed value. Obtain time step size $\Delta t:=\frac{T}{N}$ and $t_{n}:=n\frac{T}{N}$ for $n=0,1,\cdots,N$. Initialize the outer neural network and pretrain $L^{pre}$ epochs to obtain the outer boundary $\Gamma(\cdot;\theta,\bar{x})$
\FOR {episode $m=1,2,\cdots,M$}
\IF{$m$ is divisible by $m^{gap}$}
    \STATE Train the outer neural network for $L^{train}$ epochs based on the current estimate of $\theta$.
\ENDIF
\STATE Feed grids $\{t_{n}\}_{0\le n\le N}$, initial $(X_{t_{0}-},t_{0},\xi_{t_{0}-},\eta_{t_{0}-})=(x_{0},0,0,0)$, and strategy $(\Xi_{\bar{x}},\Upsilon_{\bar{x}})$ into the randomized environment simulator \ref{randsi} and obtain the output data.

\IF{activation time $\tau<\infty$}
    \STATE Apply the martingale loss algorithm to update $\theta$ by (\ref{PEu1}) with gradient clipping in $[-M_{gc},M_{gc}]$. Apply boundary clipping (\ref{adj11})$\sim$(\ref{adj13}) to adjust the updated parameters.
\ELSE
    \STATE Apply the martingale loss algorithm to update $\theta$ by (\ref{PEu2}) with gradient clipping in $[-M_{gc},M_{gc}]$. Apply boundary clipping (\ref{adj01})$\sim$(\ref{adj03}) to adjust the updated parameters.
\ENDIF

\STATE Apply policy iteration \ref{PIin} to update $\bar{x}\leftarrow\bar{x}'$.
\ENDFOR
\end{algorithmic}
\end{algorithm}

We employ two sub neural networks in the learning of the outer problem. The value function and $q$-function network $(\hat{f},\hat{q}_{0},\hat{q}_{1})$ consists of 3 hidden layers with 64 neurons per layer, while the outer boundary network $\Gamma$ consists of 2 hidden layers with 32 neurons per layer. The constraints on $(\hat{f},\hat{q}_{0},\hat{q}_{1})$ in Theorem \ref{qforequi} are enforced through structural modifications to the network outputs. The training settings are $L^{pre}=500$, $L^{train}=50$ and $m^{gap}=50$.

\begin{figure}[h]
\centering
\includegraphics[width=6in, keepaspectratio]{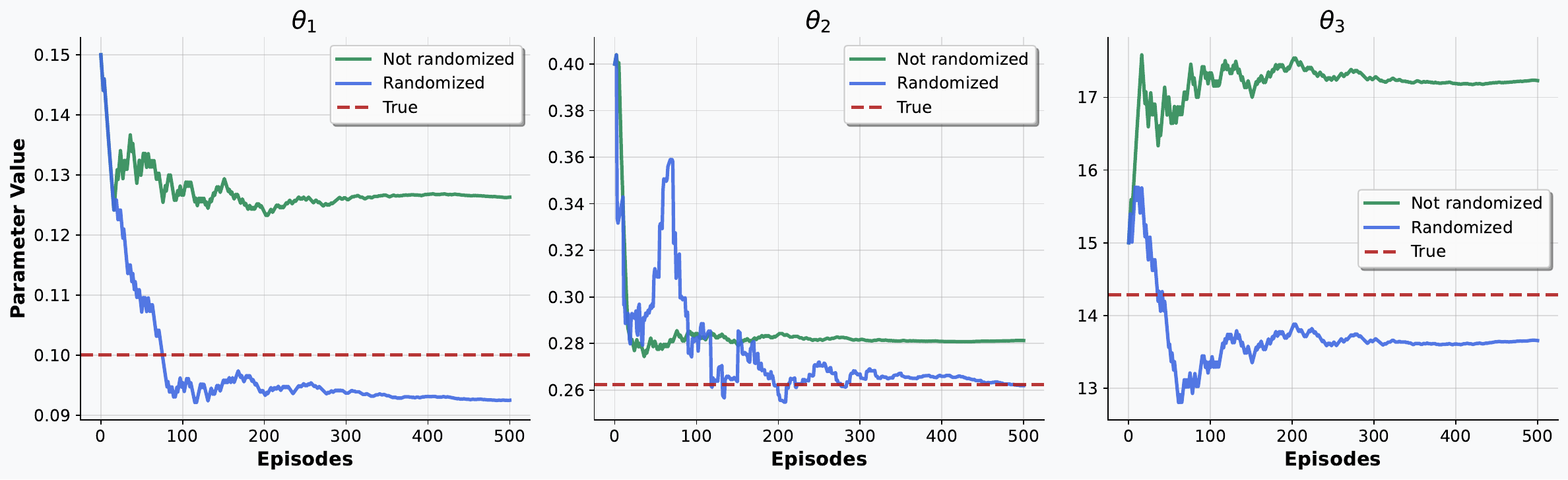}\\
\caption{Comparison of learned parameters $\theta_{i}, i=1,2,3$ of non-randomized Algorithm \ref{benchmark0} and randomized Algorithm \ref{rand0} versus the true parameters during episodes of training.}
\label{fig1}
\end{figure}

\begin{figure}[h]
\centering
\includegraphics[width=6in, keepaspectratio]{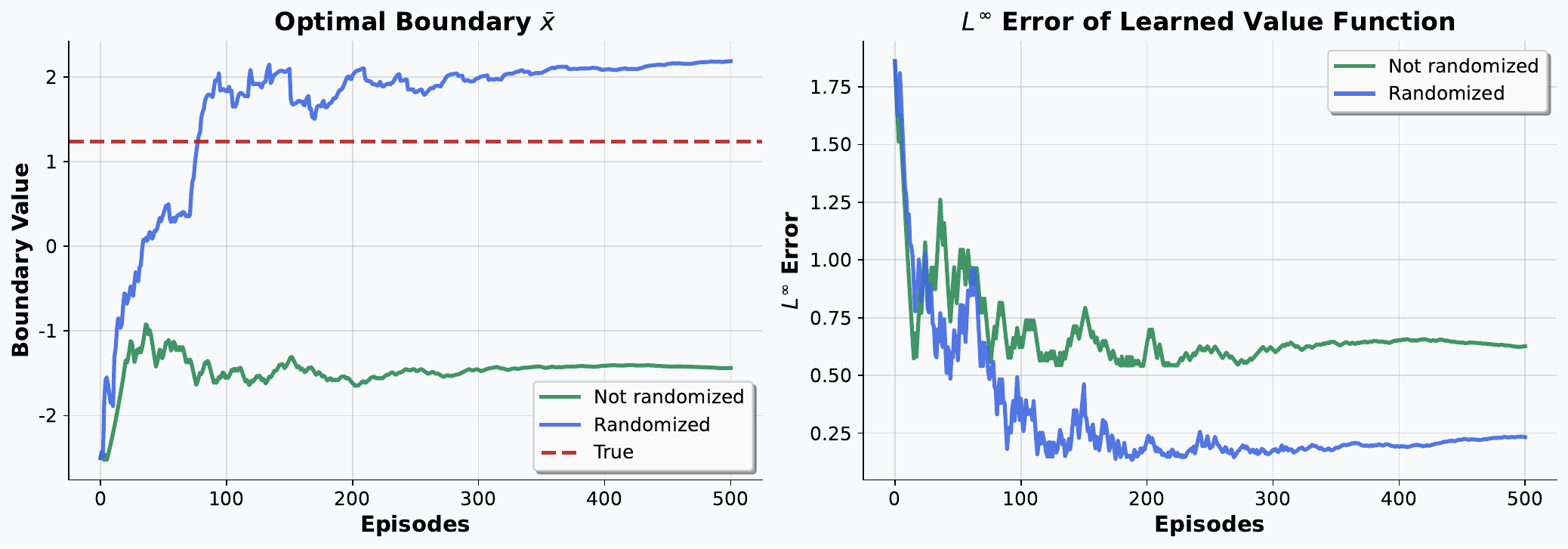}\\
\caption[Short caption for list of figures]{%
Left panel: comparison of learned optimal boundary $\bar{x}$ 
of non-randomized Algorithm \ref{benchmark0} and randomized Algorithm \ref{rand0} 
versus the true optimal boundary during episodes of training.\par
Right panel: comparison of $L^{\infty}$ error of learned value function 
$\Phi(x;\bar{x})$ of non-randomized Algorithm \ref{benchmark0} 
and randomized Algorithm \ref{rand0} during episodes of training.}
\label{fig2}
\end{figure}

The numerical results of Algorithm \ref{benchmark0} and Algorithm \ref{rand0} with temperature parameter $\lambda=0.5$ are shown in Figure \ref{fig1} and Figure \ref{fig2}. Figure \ref{fig1} plots the iteration convergence of learned $\theta_{i}, i=1,2,3$ using the non-randomized Algorithm \ref{benchmark0} and the randomized Algorithm \ref{rand0}, respectively, versus the true values of $\theta_{i}, i=1,2,3$. Figure \ref{fig2} illustrates the learned optimal boundary $\bar{x}$ and the $L^{\infty}$ error of the learned value function $\Phi(x;\bar{x})$ using the non-randomized Algorithm \ref{benchmark0} and the randomized Algorithm \ref{rand0}, respectively. Here, the $L^{\infty}$ error is numerically approximated by the $L^{\infty}$ error of $\Phi(x;\bar{x})$ for $x\in[-100,100]$. Comparing the iteration convergence of parameters and the $L^\infty$ errors, we observe that the randomized Algorithm \ref{rand0} performs better than the non-randomized Algorithm \ref{benchmark0}, particularly, the exploration by considering the randomized singular control laws evidently improves the accuracy of learning in this example.

\section{On the Existence of Equilibrium in the Outer Problem}
\label{existence}
In this section, we turn back to analyzing the extended HJB system (\ref{v})$\sim$(\ref{fp}) and prove the existence of solution. Combing this result and the verification result in Theorem \ref{verif-outer}, we conclude the existence of equilibrium in the time-inconsistent outer problem.

Based on the probability interpretation in Theorem \ref{verif-outer},  \ $V$ satisfying the conditions in Theorem \ref{verif-outer} is homogeneous in time, and the corresponding $f(x,t,z,p,s)$ is a function of $(x,z,p,t-s)$. Moreover, we have the boundary condition at $z=1$ that $f^{p,s}(x,t,1)=-\frac{\lambda}{\beta}e^{-\beta(t-s)}\mathcal{E}(1)$ and $V(x,t,1)=-\frac{\lambda}{\beta}\mathcal{E}(1)$. \\
These results motivate us to start with the existence of solution $G(x,z)$, $\{g^{p,s}(x,t,z)\}_{(p,s)\in\mathbb{R}\times[0,+\infty)}$ to the following HJB equations:
\begin{align}
&\max\big\{-\lambda\mathcal{E}'(z)+\mu G_{x}(x,z)+\frac{1}{2}\sigma^{2}G_{xx}(x,z)-(\mathcal{H}g)(x,z), -\Phi(x)-G(x,z)\big\}=0,\label{G}\\
&-\lambda e^{-\beta(t-s)}\mathcal{E}'(z)+(\mathcal{A}g^{p,s})(x,t,z)=0,\ \forall (x,t,z)\in\mathcal{W}^{\hat{\Upsilon}},\label{gw}\\
&\Psi(p,t-s)+g^{p,s}(x,t,z)=0,\ \forall (x,t,z)\notin\mathcal{W}^{\hat{\Upsilon}},\label{gp}
\end{align}
where
\begin{align*}
&(\mathcal{H}g)(x,z):=[g_{s}+\mu g_{p}+\frac{1}{2}\sigma^{2}(2g_{xp}+g_{pp})](x,0,z,x,0),\\
&\mathcal{W}^{\hat{\Upsilon}}:=\big\{(x,t,z)\in\mathcal{R}\big|\Phi(x)+G(x,z)>0\big\}.
\end{align*}
 $G(x,z)$ and $\{g^{p,s}(x,t,z)\}_{(p,s)\in\mathbb{R}\times[0,+\infty)}$ have the forms:  $G(x,z)=V_{z}(x,t,z), g^{p,s}(x,t,z)=f^{p,s}_{z}(x,t,z)$ for any $(x,t,z,p,s)$. 
\vskip 3pt 
\noindent
The existence of solution to (\ref{G})$\sim$(\ref{gp}) can be studied by fixing $z$. Indeed, for a fixed $z\in[0,1]$, if $\mathcal{E}'(z)\ge 0$, we can verify that $G(x,z)=-\Phi(x)$ and $g^{p,s}(x,t,z)=-\Psi(p,t-s)$ solves the above equation. In the following, we only consider $z$ with $\mathcal{E}'(z)<0$.

\subsection{Preliminary Properties of $\Psi$}
Let us first analyze some basic properties of $\Psi$ to support the proof the existence of solution to the system (\ref{G})$\sim$(\ref{gp}).

\begin{lemma}
\label{property_psi}
$\Psi(p,q)$ is non-decreasing in $q$ and
\begin{equation*}
\Phi(p)=\Psi(p,0)\le\Psi(p,q)\le\lim\limits_{q\rightarrow+\infty}\Psi(p,q)=C_{a}e^{ap},\quad\forall (p,q).
\end{equation*}
\end{lemma}
\begin{proof}
{\bf Step 1.} \ {  Proof of monotonicity and the first inequality: }
\vskip 2pt 
\noindent
Recalling the PDE for $U^{r}(x,t)$, we have the following PDE for $\Psi(x,r-t)=U^{r}(x,t)$:
\begin{equation}\label{pp}
\left\{
\begin{array}{l}
e^{ap}-\beta \Psi(p,q)-\Psi_{q}(p,q)+\mu\Psi_{p}(p,q)+\frac{1}{2}\sigma^{2}\Psi_{pp}(p,q)=0,\quad \forall (p,q)\in\mathbb{R}\times[0,+\infty),\\
\Psi(p,0)=\Phi(p).
\end{array}
\right.
\end{equation}
Using  $U(x,t,r)\ge U(x,t,t),\forall r\ge t$, we have $I(p):=\Psi_{q}(p,0)\ge 0,\forall p\in\mathbb{R}$. Moreover, $\varphi:=\Psi_{q}$ solves
\begin{equation*}
\left\{
\begin{array}{l}
-\beta \varphi(p,q)-\varphi_{q}(p,q)+\mu\varphi_{p}(p,q)+\frac{1}{2}\sigma^{2}\varphi_{pp}(p,q)=0,\quad \forall (p,q)\in\mathbb{R}\times[0,+\infty),\\
\varphi(p,0)=I(p) 
\end{array}
\right.
\end{equation*}
and can be presented by
\begin{equation}\label{ppp}
\varphi(p,q)=e^{-\beta q}\mathbb{E}_{p,0}I(X_{q})
\end{equation}
with $X_{q}=p+\mu q+\sigma B_{q},q\ge 0$. Thus $\varphi(p,q)\ge 0$.
\vskip 2pt \noindent
{\bf Step 2.} { Proof of the second inequality:}\vskip 2pt \noindent
It is sufficient to show that $\lim\limits_{q\rightarrow+\infty}\Psi(p,q)=C_{a}e^{ap}$. Based on the expression of $\Phi$, there exist constants $M_{1}$ and $M_{2}$ such that
\begin{equation*}
0<\Phi(x)\le M_{1}+M_{2}e^{ax},\  \forall x\in\mathbb{R}.
\end{equation*}
It follows  that
\begin{align*}
|\Psi(p,q)-C_{a}e^{ap}|
&=|e^{-\beta q}\int_{-\infty}^{\infty}\frac{1}{\sqrt{2\pi\sigma^{2}q}}e^{-\frac{[y-p-\mu q]^{2}}{2\sigma^{2}q}}\big[\Phi(y)-C_{a}e^{ay}\big]dy|\\
&\le\int_{-\infty}^{+\infty}\frac{1}{\sqrt{2\pi\sigma^{2}q}}e^{-\frac{[y-p-\mu q]^{2}}{2\sigma^{2}q}-\beta q}\big[M_{1}+\tilde{M}_{2}e^{ay}\big]dy\\
&=M_{1} e^{-\beta q} + \tilde{M}_{2} e^{ap} e^{-(\beta-a\mu - \frac{1}{2}a^{2}\sigma^{2} )q},
\end{align*}
which leads to the desired conclusion.
\end{proof}

\begin{lemma}
\label{property_psi_q}
It holds that
\begin{equation*}
0\le\Psi_{q}(p,q)\le e^{ap-\frac{1}{C_{a}}q}+|-e^{a\hat{x}}+\beta c\hat{x}|e^{-\beta q}.
\end{equation*}
\end{lemma}
\begin{proof}
In view of the PDE (\ref{pp}) of $\Psi(p,q)$, sending $q\rightarrow0$, we obtain 
\begin{align}
\Psi_{q}(p,0)=&e^{ap}-\beta\Phi(p)+\mu\Phi'(p)+\frac{1}{2}\sigma^{2}\Phi''(p)\notag\\
=&\begin{cases}
0, & p<\hat{x},\\
e^{ap}-\beta cp-e^{a\hat{x}}+\beta c\hat{x}, & p\ge \hat{x}
\end{cases}\label{eq_Psi}\\
\le& e^{ap}+|-e^{a\hat{x}}+\beta c\hat{x}|.\notag
\end{align}
Then it follows from \eqref{ppp} and \eqref{eq_Psi} that 
\begin{equation*}
\Psi_{q}(p,q)\le e^{-\beta q}\mathbb{E}_{p,0}[e^{aX_{q}}+|-e^{a\hat{x}}+\beta c\hat{x}|]
\le  e^{ap-\frac{1}{C_{a}}q}+|-e^{a\hat{x}}+\beta c\hat{x}|e^{-\beta q}.
\end{equation*}
Note that $\hat{x}$ is the turning point of $x\mapsto e^{ax}-\beta c x$, i.e.,
\begin{equation*}
\hat{x}=\frac{1}{a}\ln(\frac{bc(\beta-\mu a-\frac{1}{2}\sigma^{2}a^{2})}{a(b-a)})=\frac{1}{a}\ln(\frac{bc(\mu+\frac{1}{2}\sigma^{2}(a+b))}{a})> \frac{1}{a}\ln(\frac{c(\mu b+\frac{1}{2}\sigma^{2} b^{2})}{a})=\frac{1}{a}\ln(\frac{\beta c}{a}).
\end{equation*}
Then $I(p):=\Psi_{q}(p,0)$ is non-decreasing and $I(p)\ge 0$ for all $p\in\mathbb{R}$,  hence
\begin{equation*}
\Psi_{q}(p,q)=e^{-\beta q}\mathbb{E}_{p,0}I(X_{q})\ge 0.
\end{equation*}
\end{proof}

\begin{lemma}
\label{property_psi_p_pp}
We have the following bounds for $\Psi_{p}(p,q)$
and $\Psi_{pp}(p,q)$ respectively that
\begin{align*}
&0<aC_{a}e^{ap}(1-e^{-\frac{1}{C_{a}}q})\le \Psi_{p}(p,q)\le aC_{a}e^{ap},\\
&0<a^{2}C_{a}e^{ap}(1-e^{-\frac{1}{C_{a}}q})\le\Psi_{pp}(p,q)\le a^{2}C_{a}e^{ap}.
\end{align*}
\end{lemma}
\begin{proof}
$\varphi(p,q):=\Psi_{p}(p,q)$ satisfies 
\begin{equation*}
\left\{
\begin{array}{l}
ae^{ap}-\beta \varphi(p,q)-\varphi_{q}(p,q)+\mu\varphi_{p}(p,q)+\frac{1}{2}\sigma^{2}\varphi_{pp}(p,q)=0,\quad \forall (p,q)\in\mathbb{R}\times[0,+\infty),\\
\varphi(p,0)=\Phi'(p).
\end{array}
\right.
\end{equation*}
Then
\begin{equation*}
\Psi_{p}(p,q)=aC_{a}e^{ap}+e^{-\beta q}\mathbb{E}_{p,0}[\Phi'(X_{q})-aC_{a}e^{aX_{q}}].
\end{equation*}
Using the fact that $0\le\Phi'(x)\le aC_{a}e^{ax}$, we obtain that
\begin{equation*}
aC_{a}e^{ap}(1-e^{-\frac{1}{C_{a}}q})\le \Psi_{p}(p,q)\le aC_{a}e^{ap}.
\end{equation*}

Now $\phi(p,q):=\Psi_{pp}(p,q)$ satisfies 
\begin{equation*}
\left\{
\begin{array}{l}
a^{2}e^{ap}-\beta \phi(p,q)-\phi_{q}(p,q)+\mu\phi_{p}(p,q)+\frac{1}{2}\sigma^{2}\phi_{pp}(p,q)=0,\quad \forall (p,q)\in\mathbb{R}\times[0,+\infty),\\
\phi(p,0)=\Phi''(p).
\end{array}
\right.
\end{equation*}
Then
\begin{equation*}
\Psi_{pp}(p,q)=a^{2}C_{a}e^{ap}+e^{-\beta q}\mathbb{E}_{p,0}[\Phi''(X_{q})-a^{2}C_{a}e^{aX_{q}}].
\end{equation*}
Note that $\Phi''(\hat{x})=0$ for $x\ge\hat{x}$ and $\Phi''(x)=a^{2}C_{a}e^{ax}(1+\frac{b^{2}C_{b}}{a^{2}C_{a}}e^{(b-a)x})>0$ for $x<\hat{x}$, we have $0\le \Phi''(x)\le a^{2}C_{a}e^{ax}$. Then
\begin{equation*}
a^{2}C_{a}e^{ap}(1-e^{-\frac{1}{C_{a}}q})\le\Psi_{pp}(p,q)\le a^{2}C_{a}e^{ap}.
\end{equation*}
\end{proof}

\begin{proposition}
\label{property_psi_pq}
It holds that $\Psi_{pq}(p,q)\ge 0$.
\end{proposition}
\begin{proof}
$\varphi(p,q):=\Psi_{pq}(p,q)$ satisfies
\begin{equation*}
\begin{cases}
-\beta \varphi(p,q)-\varphi_{q}(p,q)+\mu\varphi_{p}(p,q)+\frac{1}{2}\sigma^{2}\varphi_{pp}(p,q)=0,\quad \forall (p,q)\in\mathbb{R}\times[0,+\infty),\\
\varphi(p,0)=I'(p).
\end{cases}
\end{equation*}
Recalling (\ref{eq_Psi}), we have
\begin{equation*}
I'(p)=\begin{cases}
0, & p<\hat{x},\\
ae^{ap}-\beta c, & p\ge \hat{x}.
\end{cases}
\end{equation*}
In light of  
\begin{equation*}
ae^{a\hat{x}}-\beta c=\frac{bc(\beta-\mu a-\frac{1}{2}\sigma^{2}a^{2})}{b-a}-\beta c
=ac\frac{\beta-b(\mu +\frac{1}{2}\sigma^{2}a)}{b-a}
=ac\frac{b\mu+\frac{1}{2}\sigma^{2}b^{2}-b(\mu +\frac{1}{2}\sigma^{2}a)}{b-a}\ge 0,
\end{equation*}
we deduce that
\begin{equation*}
\Psi_{pq}(p,q)=e^{-\beta q}\mathbb{E}_{p,0}I'(X_{q})\ge 0.
\end{equation*}
\end{proof}

\subsection{Sub-Problem for $\{g^{p,s}(x,t,z)\}_{(p,s)\in\mathbb{R}\times[0,+\infty)}$ given $G$}

In this subsection, for each fixed $z\in[0,1]$ with $\mathcal{E}'(z)<0$ and
 $(p,s)\in\mathbb{R}\times[0,+\infty)$, we focus on the existence of solution $g^{p,s,z}(x,t)$ to the sub-problem
\begin{equation}
\label{eqg}
\begin{aligned}
&-\lambda e^{-\beta(t-s)}\mathcal{E}'(z)+(\mathcal{A}g^{p,s,z})(x,t)=0,\forall (x,t)\in\mathcal{W}^{G}\times[s,+\infty),\\
&\Psi(p,t-s)+g^{p,s,z}(x,t)=0,\forall (x,t,z)\in(\mathcal{W}^{G})^{c}\times[s,+\infty),
\end{aligned}
\end{equation}
where
\begin{equation*}
\mathcal{W}^{G}:=\{x\in\mathbb{R}|G(x)>-\Phi(x)\}
\end{equation*}
denotes the waiting region corresponding to $G$, and $G$ is a given function taken from $\mathcal{R}_{G}$ defined by
\begin{align*}
\mathcal{R}_{G}:=&\{G:\mathbb{R}\rightarrow\mathbb{R}|G\in W^{2}_{p,loc}(\mathbb{R})\bigcap C^{1+\alpha}(\mathbb{R}),\\
&\Vert G(x)\Vert_{W^{2}_{p}([-m,m])}\le C_{G} e^{[a\vee\frac{\mu}{\sigma^{2}}]m},\forall m\ge 1\ \text{for some constant}\ C_{G}\in\mathbb{R},\\
&-\Phi(x)\le G(x)\le -\Phi(x)+\frac{2\sigma^{2}}{\mu^{2}}\tilde{M}_{G}e^{-\frac{\mu}{\sigma^{2}}(x-\tilde{x}_{G})}+1\ \text{for some constant}\ \tilde{M}_{G}, \tilde{x}_{G}\in\mathbb{R}, \\
&(-\infty,\theta_{G})\subseteq\mathcal{W}^{G}\subseteq(-\infty,x_{G})\ \text{for some}\ \theta_{G},x_{G}\in\mathbb{R}\ \text{and } \mathcal{W}^{G}\ \text{consists of finite open intervals}\}.
\end{align*}

\begin{theorem}
There exists a solution $g^{p,s,z}\in  W^{2,1}_{p,loc}((\mathbb{R}\setminus\partial\mathcal{W}^{G})\times[s,+\infty))\bigcap C^{2+\alpha,1+\frac{\alpha}{2}}((\mathbb{R}\setminus\partial\mathcal{W}^{G})\times[s,+\infty))\bigcap C(\mathbb{R}\times[s,+\infty))$ to (\ref{eqg}), where $\partial\mathcal{W}^{G}\subseteq\mathbb{R}$ consists of finite real numbers in $\mathbb{R}$. Moreover, it holds that
\begin{equation}
-C_{a}e^{ap}-\frac{\lambda}{\beta}   e^{-\beta(t-s)}|\mathcal{E}'(z)|\le g^{p,s,z}(x,t)\le -\Phi(p)+\frac{\lambda}{\beta} e^{-\beta(t-s)}|\mathcal{E}'(z)|.\label{esti_g}
\end{equation}
\end{theorem}
\begin{proof}
For $G\in\mathcal{R}_{G}$, $\mathcal{W}^{G}$ consists of an unbounded interval $(-\infty,\Gamma)$ and finite bounded intervals of forms like $(\Gamma_{L},\Gamma_{R})$. We only need to show the existence of $W^{2,1}_{p,loc}\bigcap C^{2+\alpha,1+\frac{\alpha}{2}}$ solution in each of the corresponding sub waiting regions with the Dirichlet boundary condition, and that estimate (\ref{esti_g}) holds.

{\bf 1. Proof in an unbounded interval:}

Suppose $(-\infty,\Gamma)\subseteq\mathcal{W}^{G}$ is an unbounded sub interval and $\mathcal{W}^{s}_{-}:=(-\infty,\Gamma)\times[s,+\infty)$ is the corresponding sub waiting region.

Take $T_{n}\uparrow+\infty$ and define $Q^{n,-}_{T_{n}-s}:=(\Gamma-n,\Gamma)\times(0,T_{n}-s]$, then 
\begin{equation*}
\begin{aligned}
&-\lambda e^{-\beta(T_{n}-t-s)}\mathcal{E}'(z)-h^{p,s,z,n}_{t}(x,t)+\mu h^{p,s,z,n}_{x}(x,t)+\frac{1}{2}\sigma^{2}h^{p,s,z,n}_{xx}(x,t)=0,\forall (x,t)\in Q^{n,-}_{T_{n}-s},\\
&h^{p,s,z,n}(\Gamma,t) =-\Psi(p,T_{n}-t-s),\ h^{p,s,z,n}(\Gamma-n,t) =-\Psi(p,T_{n}-s),\quad \forall t\in(0,T_{n}-s],\\
&h^{p,s,z,n}(x,0) =-\Psi(p,T_{n}-s),\quad\forall x\in(\Gamma-n,\Gamma).
\end{aligned}
\end{equation*}
has a $W^{2,1}_{p}(Q^{n,-}_{T_{n}-s})\bigcap C^{2+\alpha,1+\frac{\alpha}{2}}(Q^{n,-}_{T_{n}-s})\bigcap C(\overline{Q^{n,-}_{T_{n}-s}})$ solution by the $L^{p}$ theory for the first initial boundary value problem, the embedding theorem and Schauder's interior estimate. The comparison principle and Lemma \ref{property_psi} yield that
\begin{equation}
\label{esti_hn}
-C_{a}e^{ap}-\frac{\lambda}{\beta}e^{-\beta(T_{n}-s-t)}|\mathcal{E}'(z)|\le h^{p,s,z,n}(x,t)\le -\Phi(p)+\frac{\lambda}{\beta}e^{-\beta(T_{n}-s-t)}|\mathcal{E}'(z)|.
\end{equation}
Define $g^{p,s,z,n}(x,t)=h^{p,s,z,n}(x,T_{n}-t),\forall(x,t)\in\mathcal{W}^{s}_{n,\frac{1}{n}}:=(\Gamma-n,\Gamma-\frac{1}{n})\times[s,T_{n})$. We have $\mathcal{W}^{s}_{m,\frac{1}{m}}\uparrow\mathcal{W}^{s}_{-}$ as $m\uparrow\infty$. For any $n\ge m+1$, the transformed region of $\mathcal{W}^{s}_{m,\frac{1}{m}}$ under $h^{p,s,z,n}(x,t)=g^{p,s,z,n}(x,T_{n}-t)$ is $Q^{s,m,n}:=(\Gamma-m,\Gamma-\frac{1}{m})\times(T_{n}-T_{m},T_{n}-s]$, and we have 
\begin{equation*}
\Vert g^{p,s,z,n}\Vert_{W^{2,1}_{p}(\mathcal{W}^{s}_{m,\frac{1}{m}})}\le C\Vert h^{p,s,z,n}\Vert_{W^{2,1}_{p}(Q^{s,m,n})}.
\end{equation*}

Interior $L^{p}$ estimate yields that for any $n\ge m+1$
\begin{align*}
&\Vert h^{p,s,z,n}(x,t)\Vert_{W^{2,1}_{p}(Q^{s,m,n})}\\\le& C\bigg[\Vert -\lambda e^{-\beta(T_{n}-t-s)}\mathcal{E}'(z)\Vert_{L^{p}(Q^{s,m+1,n})}+\Vert h^{p,s,z,n}(x,t)\Vert_{L^{p}(Q^{s,m+1,n})}\bigg]\\
\le&C\bigg[\Vert \lambda\mathcal{E}'(z)\Vert_{L^{p}(Q^{s,m+1,n})}+\Vert C_{a}e^{ap}\Vert_{L^{p}(Q^{s,m+1,n})}\bigg],
\end{align*}
where we have used Lemma \ref{property_psi}. Note that the area of $Q^{s,m+1,n}$ is independent of $n$, we deduce that $\Vert h^{p,s,z,n}(x,t)\Vert_{W^{2,1}_{p}(Q^{s,m,n})}$ and thus $\Vert g^{p,s,z,n}\Vert_{W^{2,1}_{p}(\mathcal{W}^{s}_{m,\frac{1}{m}})}$ is bounded by constant independent of $n$.

Then we can choose a subsequence $n_{k}$ such that $g^{p,s,z,n_{k}}$ converges as $n_{k}\rightarrow\infty$ weakly in $W^{2,1}_{p}(\mathcal{W}^{s}_{m,\frac{1}{m}})$ stongly in $C^{1+\alpha,\frac{1+\alpha}{2}}(\mathcal{W}^{s}_{m,\frac{1}{m}})$ for any $m$. The limit $g^{p,s,z}:=\lim\limits_{k\rightarrow\infty}g^{p,s,z,n_{k}}$ is a $W^{2,1}_{p,loc}(\mathcal{W}^{s}_{-})$ $\bigcap C^{1+\alpha,\frac{1+\alpha}{2}}(\mathcal{W}^{s}_{-})\bigcap C(\overline{\mathcal{W}^{s}_{-}})$ solution to (\ref{eqg}). By interior Schauder estimate, $g^{p,s,z}$ is in $C^{2+\alpha,1+\frac{\alpha}{2}}(\mathcal{K})$ for any cylinder $\mathcal{K}\subset \mathcal{W}^{s}_{-}$. Then $g^{p,s,z}\in C^{2+\alpha,1+\frac{\alpha}{2}}(\mathcal{W}^{s}_{-})$.

The estimate (\ref{esti_g}) holds in $\mathcal{W}^{s}_{-}$ due to the estimate (\ref{esti_hn}) of $h^{p,s,z,n}$.

{\bf 2. Proof in a bounded interval:}

Suppose $(\Gamma_{L},\Gamma_{R})\subseteq\mathcal{W}^{G}$ is a bounded sub interval and $\mathcal{W}^{s}:=(\Gamma_{L},\Gamma_{R})\times[s,+\infty)$ is the corresponding sub waiting region.

Take $T_{n}\uparrow+\infty$ and define $Q_{T_{n}-s}:=(\Gamma_{L},\Gamma_{R})\times(0,T_{n}-s]$, then
\begin{equation*}
\begin{aligned}
&-\lambda e^{-\beta(T_{n}-t-s)}\mathcal{E}'(z)-h^{p,s,z,n}_{t}(x,t)+\mu h^{p,s,z,n}_{x}(x,t)+\frac{1}{2}\sigma^{2}h^{p,s,z,n}_{xx}(x,t)=0,\forall (x,t)\in Q_{T_{n}-s},\\
&h^{p,s,z,n}(0,t) =-\Psi(p,T_{n}-s),\ h^{p,s,z,n}(-n,t) =-\Psi(p,T_{n}-s),\quad \forall t\in(0,T_{n}-s],\\
&h^{p,s,z,n}(x,0) =-\Psi(p,T_{n}-s),\quad\forall x\in(\Gamma_{L},\Gamma_{R}).
\end{aligned}
\end{equation*}
has a $W^{2,1}_{p}(Q_{T_{n}-s})\bigcap C^{2+\alpha,1+\frac{\alpha}{2}}(Q_{T_{n}-s})\bigcap C(\overline{Q_{T_{n}-s}})$ solution. The comparison principle still yields (\ref{esti_hn}). Define $g^{p,s,z,n}(x,t)=h^{p,s,z,n}(x,T_{n}-t),\forall(x,t)\in\mathcal{W}^{s}_{\frac{1}{n}}:=(\Gamma_{L}+\frac{1}{n},\Gamma_{R}-\frac{1}{n})\times[s,T_{n})$. We have $\mathcal{W}^{s}_{\frac{1}{m}}\uparrow\mathcal{W}^{s}$ as $m\uparrow\infty$. For any $n\ge m+1$, the transformed region of $\mathcal{W}^{s}_{\frac{1}{m}}$ under $h^{p,s,z,n}(x,t)=g^{p,s,z,n}(x,T_{n}-t)$ is $Q^{s,m,n}:=(\Gamma_{L}+\frac{1}{m},\Gamma_{R}-\frac{1}{m})\times(T_{n}-T_{m},T_{n}-s]$. Following a similar argument as in the unbounded interval case, we have a subsequence $n_{k}$ such that $g^{p,s,z,n_{k}}$ converges as $n_{k}\rightarrow\infty$ weakly in $W^{2,1}_{p}(\mathcal{W}^{s}_{\frac{1}{m}})$ stongly in $C^{1+\alpha,\frac{1+\alpha}{2}}(\mathcal{W}^{s}_{\frac{1}{m}})$ for any $m$. The limit $g^{p,s,z}:=\lim\limits_{k\rightarrow\infty}g^{p,s,z,n_{k}}$ is the desired solution in $\mathcal{W}^{s}$.

\end{proof}

\begin{theorem}
For any $G\in \mathcal{R}_{G}$, the $W^{2,1}_{p,loc}((\mathbb{R}\setminus\partial\mathcal{W}^{G})\times[s,+\infty))\bigcap C^{2+\alpha,1+\frac{\alpha}{2}}((\mathbb{R}\setminus\partial\mathcal{W}^{G})\times[s,+\infty))\bigcap C(\mathbb{R}\times[s,+\infty))$ solution $g^{p,s,z}$ to (\ref{eqg}) with estimate  (\ref{esti_g}) is uniquely given by
\begin{equation}
\label{present_g}
g^{p,s,z}(x,t)=\mathbb{E}_{x,t}[-\Psi(p,\tau-s)-\frac{\lambda}{\beta}(e^{-\beta(t-s)}-e^{-\beta(\tau-s)})\mathcal{E}'(z)],
\end{equation}
where $\tau:=\inf\{r\ge t|X_{r}\notin\mathcal{W}^{G}\}$. As a consequence, $g(x,t,z,p,s)=g^{p,s,z}(x,t)$ is $C^{1}$ in $s$ and $C^{2}$ in $p$ with
\begin{align}
&g^{p,s,z}_{s}(x,t)=\mathbb{E}_{x,t}[\Psi_{q}(p,\tau-s)-\lambda(e^{-\beta(t-s)}-e^{-\beta(\tau-s)})\mathcal{E}'(z)],\label{present_gs}\\
&g^{p,s,z}_{p}(x,t)=\mathbb{E}_{x,t}[-\Psi_{p}(p,\tau-s)],\label{present_gp}\\
&g^{p,s,z}_{pp}(x,t)=\mathbb{E}_{x,t}[-\Psi_{pp}(p,\tau-s)],\label{present_gpp}\\
&g^{p,s,z}_{xp}(x,t)=\mathbb{E}_{x,t}[-\Psi_{p}(p,\tau-s)A(x,\tau-t)].\label{present_gxp}
\end{align}
\end{theorem}
where $\tau:=\inf\{r\ge t|X_{r}\notin \mathcal{W}^{G}\}$ with $X_{r}=x+\mu(r-t)+B_{r-t}$ and 
\begin{equation*}
A(x,r-t)=\frac{\frac{\partial}{\partial x}f_{\tau}(r;x,t)}{f_{\tau}(r;x,t)}
\end{equation*}
with $f_{\tau}(r;x,t)$ being the density function of $\tau$.
\begin{proof}

Let $g^{p,s,z}$ be a solution with the desired regularity and estimate. For fixed $p,s,z$, $(x,t)$ and $\tau<\infty$ a.s., It\^{o}'s formula gives
\begin{equation*}
g^{p,s,z}(X_{\tau},\tau)-g^{p,s,z}(x,t)=\int_{t}^{\tau}\mathcal{A}g^{p,s,z}(X_{r},r)dr+\int_{t}^{r}\sigma g_{x}^{p,s,z}(X_{r},r)dB_{r}
\end{equation*}
That is,
\begin{equation*}
g^{p,s,z}(x,t)=g^{p,s,z}(X_{\tau},\tau)-\int_{t}^{\tau}\lambda e^{-\beta(r-s)}\mathcal{E}(z)dr-\int_{t}^{r}\sigma g_{x}^{p,s,z}(X_{r},r)dB_{r}.
\end{equation*}
The above equality also holds for $\tau_{R,M}:=\min\{\tau,M,\tau_{R}\}$ where $\tau_{R}:=\inf\{r\ge t||X_{r}|\ge R\}$. Taking expectation, we have
\begin{equation*}
g^{p,s,z}(x,t)=\mathbb{E}_{x,t}g^{p,s,z}(X_{\tau_{R,M}},\tau_{R,M})-\mathbb{E}_{x,t}\int_{t}^{\tau_{R,M}}\lambda e^{-\beta(r-s)}\mathcal{E}'(z)dr.
\end{equation*}
Note that $g^{p,s,z}$ is bounded, sending $M\rightarrow+\infty, R\rightarrow+\infty$ and applying the dominated convergence theorem yield (\ref{present_g}). Direct computations lead to (\ref{present_gs})$\sim$(\ref{present_gxp}).
\end{proof}

\begin{corollary}
The function $g$ given by (\ref{present_g}) satisfies $g(x,t,z,p,s)=g(x,t-s,z,p,0)$ with 
\begin{align*}
&-C_{a}e^{ap}\le g^{p,s,z}(x,t)\le -\Phi(p)-\frac{\lambda}{\beta} e^{-\beta(t-s)}\mathcal{E}'(z),\\
&-e^{ap}\le g^{p,s,z}_{s}(x,t)\le -\lambda e^{-\beta(t-s)}\mathcal{E}'(z),\\
&-aC_{a}e^{ap}\le g^{p,s,z}_{p}(x,t)\le 0,\\
&-a^{2}C_{a}e^{ap}\le g^{p,s,z}_{pp}(x,t)\le 0.
\end{align*}
\end{corollary}
\begin{proof}
By (\ref{present_g}), we have $g(x,t,z,p,s)=g(x,t-s,z,p,0)$. The estimates follow directly from (\ref{present_g})$\sim$(\ref{present_gpp}) and Lemmas \ref{property_psi}, \ref{property_psi_q} and  \ref{property_psi_p_pp}.
\end{proof}

To conclude this section, we obtain the well-posedness of the solution mapping $\mathcal{Y}_{g}$ defined by
\begin{equation*}
(\mathcal{Y}_{g}G)(x,t,z,p,s):=\mathbb{E}_{x,t}[-\Psi(p,\tau-s)-\frac{\lambda}{\beta}(e^{-\beta(t-s)}-e^{-\beta(\tau-s)})\mathcal{E}'(z)].
\end{equation*}
We define the image of $\mathcal{Y}_{g}$ by $\mathcal{R}_{g}:=\mathcal{Y}_{g}\mathcal{R}_{G}$.

\subsection{Sub Problem for $G$ given $\{g^{p,s}(x,t,z)\}_{(p,s)\in\mathbb{R}\times[0,+\infty)}$}

In the subsection, we focus on the existence of solution $G$ to the sub problem
\begin{equation}
\label{eqG}
\max\{-\lambda\mathcal{E}'(z)+\mu G^{z}_{x}(x)+\frac{1}{2}\sigma^{2}G^{z}_{xx}(x)-(\mathcal{H}g)(x,z),\ -\Phi(x)-G^{z}(x)\}=0.
\end{equation}
where $g(x,t,z,p,s)=g^{p,s,z}(x,t)$ is a given function taken from $\mathcal{R}_{g}$.

Consider the penalized problem of $G^{z,\epsilon,n}$ on $[-n,n]$:
\begin{equation}
\label{eqG_penal}
\begin{aligned}
&\!-\!\lambda\mathcal{E}'(z)\!+\!\mu G^{z,\epsilon,n}_{x}(x)\!+\!\frac{1}{2}\sigma^{2}G^{z,\epsilon,n}_{xx}(x)\!-\!(\mathcal{H}g)(x,z)\!-\!\alpha_{\epsilon,n}(\Phi(x)\!+\!G^{z,\epsilon,n}(x))=0,\ \forall x\in[-n,n],\\
&G^{z,\epsilon,n}(-n)=-\Phi(-n)+1\\
&G^{z,\epsilon,n}(n)=-\Phi(n).
\end{aligned}
\end{equation}
where the penalty function $\alpha_{\epsilon,n}(\cdot)=C_{n}\alpha_{\epsilon}(\cdot)\in C^{\infty}(\mathbb{R})$ satisfies
\begin{align*}
&\alpha_{\epsilon}(z)\le 0,\ \alpha_{\epsilon}'(z)\ge 0,\quad\forall z\in\mathbb{R},\\
&\lim\limits_{\epsilon\rightarrow0}\alpha_{\epsilon}(z)=0,\ \lim\limits_{\epsilon\rightarrow0}\alpha'_{\epsilon}(z)=0,\quad\forall z>0,\\
&\lim\limits_{\epsilon\rightarrow0}\alpha_{\epsilon}(z)=-\infty,\quad\forall z<0,\\
&\alpha_{\epsilon}(\epsilon)=0,\ \alpha_{\epsilon}(0)=-1,\\
&C_{n}:=-\lambda\mathcal{E}'(z)+\sup\limits_{x\in[-n,n]}\big|(\mathcal{H}g)(x,z)\big|+\sup\limits_{x\in[-n,n]}|\mu\Phi'(x)+\frac{1}{2}\sigma^{2}\Phi''(x)|<+\infty.
\end{align*}

\begin{theorem}
\label{exist_G}
For any $g\in\mathcal{R}_{g}$, there exists a solution $G^{z,\epsilon,n}\in  W^{2}_{p}([-n,n])\bigcap C^{1+\alpha}([-n,n])$ to problem (\ref{eqG_penal}) with
\begin{equation}
-\Phi(x)\le G^{z,\epsilon,n}(x)\le -\Phi(x)+\frac{2\sigma^{2}}{\mu^{2}}\tilde{M}_{g}e^{-\frac{\mu}{\sigma^{2}}(x-\tilde{x}_{g})}+1,\label{esti_penG}
\end{equation}
where $\tilde{M}_{g}$ and $\tilde{x}_{g}$ are constants independent of $\epsilon,n$ but depending on $g$.
\end{theorem}
\begin{proof}
Given $\epsilon$ and $n$, applying the Leray-Schauder fixed point theorem, one gets the existence of $W^{2}_{p}([-n,n])\bigcap C^{1+\alpha}([-n,n])$ solution $G^{z,\epsilon,n}$ to problem (\ref{eqG_penal}). 

For $\underline{G}^{z,\epsilon,n}(x):=-\Phi(x)$, we have
\begin{align*}
&-\lambda\mathcal{E}'(z)+\mu \underline{G}^{z,\epsilon,n}_{x}(x)+\frac{1}{2}\sigma^{2}\underline{G}^{z,\epsilon,n}_{xx}(x)-(\mathcal{H}g)(x,z)-\alpha_{\epsilon,n}(\Phi(x)+\underline{G}^{z,\epsilon,n}(x))\\
=&-\lambda\mathcal{E}'(z)-\mu \Phi'(x)-\frac{1}{2}\sigma^{2}\Phi''(x)-(\mathcal{H}g)(x,z)-\alpha_{\epsilon,n}(0)\ge 0,\forall x\in[-n,n],
\end{align*}
where we used the fact that $\alpha_{\epsilon,n}(0)=-C_{n}$.

For any $\epsilon<1$, for $\overline{G}^{z,\epsilon,n}(x):=-\Phi(x)+A_{1}e^{-A_{2}(x-A_{3})}+1$ with $A_{1},A_{2}>0$ and $A_{3}\in\mathbb{R}$ to be determined, we have
\begin{align}
&-\lambda\mathcal{E}'(z)+\mu \overline{G}^{z,\epsilon,n}_{x}(x)+\frac{1}{2}\sigma^{2}\overline{G}^{z,\epsilon,n}_{xx}(x)-(\mathcal{H}g)(x,z)-\alpha_{\epsilon,n}(\Phi(x)+\overline{G}^{z,\epsilon,n}(x))\notag\\
=&-\lambda\mathcal{E}'(z)+(-\mu A_{1}A_{2}+\frac{1}{2}\sigma^{2}A_{1}A_{2}^{2})e^{-A_{2}(x-A_{3})}-\mu \Phi'(x)-\frac{1}{2}\sigma^{2}\Phi''(x)-(\mathcal{H}g)(x,z),\label{expression_withA}
\end{align}
where we have used the fact that $\alpha_{\epsilon,n}(z)=0$ for $z\ge1$ and $\epsilon<1$. For $g^{p,s,z}\in\mathcal{R}_{g}$, there exists $G_{g}\in\mathcal{R}_{G}$ such that $g=\mathcal{Y}_{g}G_{g}$. As a result, there exists $x_{g}:=x_{G_{g}}\in\mathbb{R}$ such that $x\notin \mathcal{W}^{G_{g}}$ for $x\ge x_{g}$, and
\begin{align*}
(\mathcal{H}g)(x,z)=\begin{cases}
\Psi_{q}(x,0)+\mu\Phi'(x)+\frac{1}{2}\sigma^{2}\Phi''(x), & x\notin\mathcal{W}^{G_{g}},\\
\mathbb{E}_{x,0}[e^{ax}-\beta\Psi(x,\tau)]+\sigma^{2}g_{xp}(x,0,z,x,0), & x\in\mathcal{W}^{G_{g}},
\end{cases}
\end{align*}
which is bounded by some constant $M_{g}$ on $(-\infty,x_{g})$. Then
\begin{align*}
&-\lambda\mathcal{E}'(z)-\mu \Phi'(x)-\frac{1}{2}\sigma^{2}\Phi''(x)-(\mathcal{H}g)(x,z)\\
&\le\begin{cases}
M_{g},&x<x_{g},\\
-\lambda\mathcal{E}'(z)\!-\!I(x),&x\ge x_{g}
\end{cases}
\le\begin{cases}
\tilde{M}_{g},&x<\tilde{x}_{g},\\
0,&x\ge \tilde{x}_{g},
\end{cases}
\end{align*}
where $\tilde{x}_{g}:=I^{-1}(-\lambda\mathcal{E}'(z))\vee x_{g}$ and $\tilde{M}_{g}:=M_{g}-\lambda\mathcal{E}'(z)$. Consequently, (\ref{expression_withA}) is non-positive for
\begin{equation*}
A_{3}=\tilde{x}_{g}>\hat{x},\quad
A_{2}=\frac{\mu}{\sigma^{2}}>0,\quad
A_{1}=\frac{2\sigma^{2}}{\mu^{2}}\tilde{M}_{g}>0.
\end{equation*}
Thanks to the comparison principle, the estimate (\ref{esti_penG}) holds.
\end{proof}

\begin{theorem}
For any $g\in\mathcal{R}_{g}$, there exists a solution $G^{z}\in W^{2}_{p,loc}(\mathbb{R})\bigcap C^{1+\alpha}(\mathbb{R})\bigcap C^{2+\alpha}(\mathbb{R}\setminus\partial \mathcal{W}^{G})$ to (\ref{eqG}), where $\mathcal{W}^{G}:=\{x|G^{z}(x)>-\Phi(x)\}$ is the corresponding waiting region and $\partial \mathcal{W}^{G}$ is the associate boundary. Moreover, it holds that
\begin{equation}
-\Phi(x)\le G^{z}(x)\le -\Phi(x)+\frac{2\sigma^{2}}{\mu^{2}}\tilde{M}_{g}e^{-\frac{\mu}{\sigma^{2}}(x-\tilde{x}_{g})}+1.\label{esti_G}
\end{equation}
\end{theorem}
\begin{proof}
The $W^{2,1}_{p}$ estimate of $G^{z,\epsilon,n}$ in the previous theorem gives
\begin{align*}
&\Vert G^{z,\epsilon,n}(x)\Vert_{W^{2}_{p}([-n,n])}\\
\le& C\Big\{\Vert -\lambda\mathcal{E}'(z)-(\mathcal{H}g)(x,z)-\alpha_{\epsilon,n}(\Phi(x)+G^{z,\epsilon,n}(x))\Vert_{L^{p}([-n,n])}+\Vert-\Phi(x)+\frac{n-x}{2n}\Vert_{W^{2}_{p}([-n,n])}\Big\}\\
\le &C\Big\{\Vert -\lambda\mathcal{E}'(z)-(\mathcal{H}g)(x,z)\Vert_{L^{p}([-n,n])}+\Vert C_{n}\Vert_{L^{p}([-n,n])}+\Vert-\Phi(x)+\frac{n-x}{2n}\Vert_{W^{2}_{p}([-n,n])}\Big\},
\end{align*}
where constant $C$ depends only on $\mu,\sigma,n$.

Then there exists a subsequence $\epsilon_{k}\rightarrow0$ such that $G^{z,\epsilon_{k},n}(x)$ converges weakly in $W^{2}_{p}([-n,n])$ to some $G^{z,n}\in W^{2}_{p}([-n,n])$. We deduce that 
\begin{align*}
&-\lambda\mathcal{E}'(z)+\mu G^{z,n}_{x}(x)+\frac{1}{2}\sigma^{2}G^{z,n}_{xx}(x)-(\mathcal{H}g)(x,z)\\&-\Big[-\lambda\mathcal{E}'(z)-\mu \Phi'(x)-\frac{1}{2}\sigma^{2}\Phi''(x)-(\mathcal{H}g)(x,z)\Big]1_{\{x|G^{z,n}(x)=-\Phi(x)\}}=0,\quad\forall x\in[-n,n],\\
&G^{z,n}(-n)=-\Phi(-n)+1,\\
&G^{z,n}(n)=-\Phi(n).
\end{align*}
For any $m$ and $n\ge m+1$, the interior $L^{p}$ estimate yields that
\begin{equation}
\label{interior_Lp}
\begin{aligned}
&\Vert G^{z,n}(x)\Vert_{W^{2}_{p}([-m,m])}\\
\le& C\bigg[\Big\Vert \lambda\mathcal{E}'(z)\!+\!(\mathcal{H}g)(x,z)\!+\!\big[\!-\!\lambda\mathcal{E}'(z)\!-\!\mu \Phi'(x)\!-\!\frac{1}{2}\sigma^{2}\Phi''(x)\!-\!(\mathcal{H}g)(x,z)\big]1_{\{x|G^{z,n}(x)=-\!\Phi(x)\}}\Big\Vert_{L^{p}([\!-m-1,m+1])}\\
&+\Vert G^{z,n}(x)\Vert_{L^{p}([-m-1,m+1])}\bigg]\\
\le& C\bigg[\Big\Vert \lambda\mathcal{E}'(z)\!+\!(\mathcal{H}g)(x,z)\Big\Vert_{L^{p}([-m-1,m+1])}\!+\!\Big\Vert\!-\!\lambda\mathcal{E}'(z)\!-\!\mu \Phi'(x)\!-\!\frac{1}{2}\sigma^{2}\Phi''(x)\!-\!(\mathcal{H}g)(x,z)\Big\Vert_{L^{p}([\!-m-1,m+1])}\\
&+\Vert -\Phi(x)\Vert_{L^{p}([-m-1,m+1])}+\Vert -\Phi(x)+\frac{2\sigma^{2}}{\mu^{2}}\tilde{M}_{g}e^{-\frac{\mu}{\sigma^{2}}(x-\tilde{x}_{g})}+1\Vert_{L^{p}([-m-1,m+1])}\bigg],
\end{aligned}
\end{equation}
where constant $C$ depends on $\mu,\sigma$ but is independent of $n$. Then $\Vert G^{z,n}\Vert_{W^{2}_{p}([-m,m])},\forall n\ge m+1$ are bounded by constant independent of $n$. Using the standard diagonal argument, we obtain a subsequence $n_{k}\uparrow+\infty$ such that for any $m\ge 1$, the sequence $G^{z,n_{k}}$ converges weakly in $W^{2}_{p}([-m,m])$, strongly in $C^{1+\alpha}([-m,m])$. The limit denoted by $G^{z}$ is a $W^{2}_{p,loc}(\mathbb{R})\bigcap C^{1+\alpha}(\mathbb{R})$ solution to (\ref{eqG}).

The $C^{2+\alpha}(\mathbb{R}\setminus\partial \mathcal{W}^{G})$ regularity follows from the Schauder interior estimate, and the estimate (\ref{esti_G}) is consequent on the estimate (\ref{esti_penG}) of $G^{z,\epsilon,n}$.
\end{proof}

\begin{theorem}
There exists a solution $G^{z}\in W^{2}_{p,loc}(\mathbb{R})\bigcap C^{1+\alpha}(\mathbb{R})$ to (\ref{eqG}) satisfying (\ref{esti_G})$\sim$(\ref{esti_Gnorm}) that admits the representation
\begin{equation}
\label{present_G}
G^{z}(x)=\sup\limits_{\tau}\mathbb{E}_{x,0}\Big[-\Phi(X_{\tau})+\int_{0}^{\tau}\big[-\lambda\mathcal{E}'(z)-(\mathcal{H}g)(X_{r},z)\big]dr\Big].
\end{equation}
Moreover, the waiting region has the form $\mathcal{W}^{G}=\bigcup\limits_{n=1}^{n_{G}-1}(\Gamma^{n}_{L},\Gamma^{n}_{R})\bigcup(-\infty,\Gamma^{n_{G}}_{R})$ where the number of intervals $n_{G}\ge 1$ is finite and $\mathcal{W}^{G}=(-\infty,\Gamma^{1}_{R})$ if $n_{G}=1$. In Addition, it holds that
\begin{equation}
\Vert G(x)\Vert_{W^{2}_{p}([-m,m])}\le C_{G} e^{[a\vee\frac{\mu}{\sigma^{2}}]m},\ \ \forall m\ge 1,\label{esti_Gnorm}
\end{equation}
where $C_{G}$ is a constant independent of $m$ but depending on $G$.
\end{theorem}
\begin{proof}

{\bf Step 1: Construction of the candidate solution:}

Let $G^{z}$ be a $W^{2}_{p,loc}(\mathbb{R})\bigcap C^{1+\alpha}(\mathbb{R})$ solution to (\ref{eqG}) satisfying (\ref{esti_G}) given in the last theorem with its waiting region $\mathcal{W}^{G}$. If there exists some $x_{0}>\Gamma(z)$ such that $x_{0}\in \mathcal{W}^{G}$, define $x_{1}:=\inf\{x>x_{0}|x\notin \mathcal{W}^{G}\}$. Then $x_{1}>\Gamma(z)$ and
\begin{align*}
&G^{z}(x_{1})=-\Phi(x_{1}),\\
&G^{z}_{x}(x_{1})=-\Phi'(x_{1}),\\
&-\lambda\mathcal{E}'(z)+\mu G^{z}_{x}(x_{1})+\frac{1}{2}\sigma^{2}G^{z}_{xx}(x_{1}-)-(\mathcal{H}g)(x_{1}-,z)=0,\\
&-\lambda\mathcal{E}'(z)+\mu G^{z}_{x}(x_{1})+\frac{1}{2}\sigma^{2}G^{z}_{xx}(x_{1}+)-(\mathcal{H}g)(x_{1}+,z)\le0.
\end{align*}
Note that $g(x,t,z,p,s)=-\Psi(p,t-s)$ for $x>\Gamma(z)$, it follows from Lemma \ref{property_psi_pq} that
\begin{equation*}
\frac{\partial}{\partial x}\Big[-\lambda\mathcal{E}'(z)-\mu \Phi'(x)-\frac{1}{2}\sigma^{2}\Phi''(x)-(\mathcal{H}g)(x,z)\Big]=-\Psi_{pq}(x,0)\le 0,\quad \forall x\ge \Gamma(z),
\end{equation*}
which further yields that
\begin{equation*}
\tilde{G}^{z}(x):=\begin{cases}
G^{z}(x),& x\le x_{1},\\
-\Phi(x),& x>x_{1}
\end{cases}
\end{equation*}
is also a $W^{2}_{p,loc}(\mathbb{R})\bigcap C^{1}(\mathbb{R})$ solution to (\ref{eqG}) with its waiting region $\mathcal{W}^{\tilde{G}}\subseteq(-\infty,x_{1})$. In the other case, for any $x_{0}>\Gamma(z)$, we have $x_{0}\notin \mathcal{W}^{G}$. In both cases, we obtain a $W^{2}_{p,loc}(\mathbb{R})\bigcap C^{1}(\mathbb{R})$ solution to (\ref{eqG}), still denoted by $G^{z}$, with the property that $\mathcal{W}^{G}\subseteq(-\infty,x_{G})$ for some $x_{G}\in\mathbb{R}$.

{\bf Step 2: Proof of properties in the waiting region:}

For $x\in(-\infty,\Gamma(z))\bigcap(\mathcal{W}^{G})^{c}$, we have 
\begin{align*}
&-\lambda\mathcal{E}'(z)+\mu G^{z}_{x}(x)+\frac{1}{2}\sigma^{2}G^{z}_{xx}(x)-(\mathcal{H}g)(x,z)\\
=&-\mu \Phi'(x)-\frac{1}{2}\sigma^{2}\Phi''(x)-\mathbb{E}_{x,0}[e^{ax}-\beta\Psi(x,\tau)+\lambda e^{-\beta\tau}\mathcal{E}'(z)]-\sigma^{2}g_{xp}(x,0,z,x,0)\\
\ge&-\lambda\mathcal{E}'(z)\mathbb{E}_{x,0} e^{-\beta\tau}-\sigma^{2}g_{xp}(x,0,z,x,0).
\end{align*}
As $x\rightarrow-\infty$, using (\ref{present_gxp}), we have $g_{xp}(x,0,z,x,0)\sim\frac{1}{\mu}\Psi_{pq}(x,\frac{\Gamma(z)-x}{\mu})\sim\max\{e^{-\beta(\frac{\Gamma(z)-x}{\mu})},e^{ax-\frac{1}{C_{a}}\frac{\Gamma(z)-x}{\mu}}\}$ and $\mathbb{E}_{x,0}e^{-\beta\tau}\sim e^{bx}$. With $b<a+\frac{1}{\mu C_{a}}$ and $b<\frac{\beta}{\mu}$, we conclude that there exists a constant $\theta_{G}$ such that $-\lambda\mathcal{E}'(z)+\mu G^{z}_{x}(x)+\frac{1}{2}\sigma^{2}G^{z}_{xx}(x)-(\mathcal{H}g)(x,z)>0$ for $x\in(-\infty,\theta_{G})$ and thus $(-\infty,\theta_{G})\subseteq\mathcal{W}^{G}\subseteq(-\infty,x_{G})$. 

Note $\mathcal{W}^{G}$ is an open set in $\mathbb{R}$, it must consists of countable open intervals $\bigcap\limits_{n\in\mathcal{S}}(\Gamma^{n}_{L},\Gamma^{n}_{R})$. We now show that the countable index set $\mathcal{S}$ must be finite.

Define $F(x)=G^{z}(x)+\Phi(x)-\frac{\lambda}{\mu}\mathcal{E}'(z)x$, then 
\begin{equation*}
\max\{\mu F'(x)+\frac{1}{2}\sigma^{2}F''(x)+K^{z}(x), -F(x)-\frac{\lambda}{\mu}\mathcal{E}'(z)x\}=0.
\end{equation*}
with
\begin{equation*}
K^{z}(x):=-\mu\Phi'(x)-\frac{1}{2}\sigma^{2}\Phi''(x)-(\mathcal{H}g)(x,z).
\end{equation*}
For $x>\Gamma(z)$, we have $K^{z}(x)=-\Psi_{q}(x,0)=-(e^{ax}-\beta cx-e^{a\hat{x}}+\beta c\hat{x})1_{\{x>\hat{x}\}}$. Then the equation
\begin{equation*}
\mu F'(x)+\frac{1}{2}\sigma^{2}F''(x)+K^{z}(x)=0
\end{equation*}
has a $C^1$ solution
\begin{align*}
F^{0}(x)=&\frac{e^{ax}}{\mu a+\frac{1}{2}\sigma^{2}a^{2}}+(-\frac{\beta c}{2\mu}x^{2}+\frac{\beta c\sigma^{2}-2\mu(e^{a\hat{x}}-\beta c\hat{x})}{2\mu^{2}}x)+\mathbb{E}\int_{0}^{+\infty}[K^{z}(X_{r})+e^{aX_{r}}-\beta cX_{r}-e^{a\hat{x}}+\beta c\hat{x}]dr\\
=&\frac{e^{ax}}{\mu a+\frac{1}{2}\sigma^{2}a^{2}}+(-\frac{\beta c}{2\mu}x^{2}+\frac{\beta c\sigma^{2}-2\mu(e^{a\hat{x}}-\beta c\hat{x})}{2\mu^{2}}x)+\mathbb{E}\int_{0}^{\tau_{\Gamma(z)\vee\hat{x}}}[K^{z}(X_{r})+e^{aX_{r}}-\beta cX_{r}-e^{a\hat{x}}+\beta c\hat{x}]dr,
\end{align*}
where $\tau_{\Gamma(z)\vee\hat{x}}:=\inf\{t\ge 0|X_{t}\ge \Gamma(z)\vee\hat{x}\}$. Define $G^{0}:=F^{0}-\Phi(x)+\frac{\lambda}{\mu}\mathcal{E}'(z)x$, then  $G^{z}$ in any bounded open interval $(\Gamma_{L},\Gamma_{R})\subseteq\mathcal{W}^{G}$ must take the form
\begin{equation*}
G(x)=G^{0}(x)+A_{1}+A_{2}e^{-\frac{\sigma^{2}}{2\mu}x}.
\end{equation*}
The first-order smooth-fit condition gives
\begin{align*}
&G^{0}(\Gamma_{L})+A_{1}+A_{2}e^{-\frac{\sigma^{2}}{2\mu}\Gamma_{L}}=-\Phi(\Gamma_{L}),\\
&G^{0}(\Gamma_{R})+A_{1}+A_{2}e^{-\frac{\sigma^{2}}{2\mu}\Gamma_{R}}=-\Phi(\Gamma_{R}),\\
&(G^{0})'(\Gamma_{L})-A_{2}\frac{\sigma^{2}}{2\mu}e^{-\frac{\sigma^{2}}{2\mu}\Gamma_{L}}=-\Phi'(\Gamma_{L}),\\
&(G^{0})'(\Gamma_{R})-A_{2}\frac{\sigma^{2}}{2\mu}e^{-\frac{\sigma^{2}}{2\mu}\Gamma_{R}}=-\Phi'(\Gamma_{R}).
\end{align*}
Eliminating $A_{1}$ and $A_{2}$, we obtain that
\begin{align*}
&e^{\frac{\sigma^{2}}{2\mu}\Gamma_{L}}[(G^{0})'(\Gamma_{L})+\Phi'(\Gamma_{L})]-e^{\frac{\sigma^{2}}{2\mu}\Gamma_{R}}[(G^{0})'(\Gamma_{R})+\Phi'(\Gamma_{R})]=0,\\
&[(G^{0})(\Gamma_{L})+\Phi(\Gamma_{L})]-[(G^{0})(\Gamma_{R})+\Phi(\Gamma_{R})]+\frac{2\mu}{\sigma^{2}}[(G^{0})'(\Gamma_{L})+\Phi'(\Gamma_{L})]-\frac{2\mu}{\sigma^{2}}[(G^{0})'(\Gamma_{R})+\Phi'(\Gamma_{R})]=0.
\end{align*}
The above two equations for $(\Gamma_{L},\Gamma_{R})$ have no common analytic factor, thus have at most finite solutions on the compact set $[\theta_{G},x_{G}]$. Thus $\mathcal{S}$ is finite.

{\bf Step 3: Proof of probability representation (\ref{present_G}):}

As a by-product of the last step, $G^{z}_{x}(x)$ is of polynomial growth. For any $x\in\mathbb{R}$, define $\tau^{*}:=\inf\{t\ge 0|X_{t}:=x+\mu t+\sigma B_{t}\notin \mathcal{W}^{G}\}$. Then It\^{o}'s formula yields that
\begin{equation}
G^{z}(x)=-\mathbb{E}_{x,0}\Phi(X_{\tau^{*}})+\mathbb{E}_{x,0}\int_{0}^{\tau^{*}}\big[-\lambda\mathcal{E}'(z)-(\mathcal{H}g)(X_{r},z)\big]dr.\label{present_G_opt}
\end{equation}

For any stopping time $\tau$, It\^{o}'s formula gives that 
\begin{align*}
&G^{z}(x)-\mathbb{E}_{x,0}\Big[-\Phi(X_{\tau})+\int_{0}^{\tau}\big[-\lambda\mathcal{E}'(z)-(\mathcal{H}g)(X_{r},z)\big]dr\Big]\\
&\ge G^{z}(x)-\mathbb{E}_{x,0}G^{z}(X_{\tau})-\mathbb{E}_{x,0}\int_{0}^{\tau}\big[-\lambda\mathcal{E}'(z)-(\mathcal{H}g)(X_{r},z)\big]dr\\
&=-\mathbb{E}_{x,0}\int_{0}^{\tau}\big[-\lambda\mathcal{E}'(z)+\mu G^{z}_{x}(X_{r})+\frac{1}{2}\sigma^{2}G^{z}_{xx}(X_{r})-(\mathcal{H}g)(X_{r},z)\big]\ge 0.
\end{align*}
Combining the last formula with (\ref{present_G_opt}) leads to the desired interpretation (\ref{present_G}).

{\bf Step 4: Proof of etimate (\ref{esti_Gnorm}):}

Using (\ref{interior_Lp}) and the fact that $|(\mathcal{H}g)(x,z)|\sim e^{ax}$ as $x\rightarrow+\infty$ and it is bounded as $x\rightarrow-\infty$, we deduce (\ref{esti_Gnorm}).
\end{proof}

In sum, we obtain the well-posedness of the solution mapping $\mathcal{Y}_{G}$ defined by
\begin{equation*}
(\mathcal{Y}_{G}g)(x):=\sup\limits_{\tau}\mathbb{E}_{x,0}\Big[-\Phi(X_{\tau})+\int_{0}^{\tau}[-\lambda\mathcal{E}'(z)-(\mathcal{H}g)(X_{r},z)]dr\Big],
\end{equation*}
and get that $\mathcal{Y}_{G}$ maps any $g\in\mathcal{R}_{g}$ into $\mathcal{R}_{G}$.

\subsection{Fixed Point}

\begin{theorem}
Define $\mathcal{Y}:=\mathcal{Y}_{G}\circ\mathcal{Y}_{g}$, then $\mathcal{Y}$ has a fixed point in $\mathcal{R}_{G}$.
\end{theorem}
\begin{proof}
Take constant $\kappa>a\vee\frac{\mu}{\sigma^{2}}\vee 1$ and define $\Vert G\Vert_{1+\alpha}:=\sum\limits_{m=1}^{\infty}\frac{1}{\kappa^{m}}\Vert G\Vert_{C^{1+\alpha}([-m,m])}$, define $\mathcal{X}:=\{G\in C^{1+\alpha}(\mathbb{R})|\Vert G\Vert_{1+\alpha}<\infty\}$, then $\mathcal{X}$ is a Banach space with norm $\Vert\cdot\Vert_{1+\alpha}$, and $\mathcal{R}_{G}$ is convex and closed in $\mathcal{X}$.

Let $\{G^{(k)}\}_{k\ge 1}$ be an arbitrary sequence in $\mathcal{R}_{G}$. Using Interior $L^{p}$ estimate and the diagonal argument in Theorem \ref{exist_G}, we obtain a subsequence $\{\mathcal{Y}G^{(k_{n})}\}_{n\ge 1}$ that converges weakly in $W^{2}_{p}([-m,m])$ and strongly in $C^{1+\alpha}([-m,m])$ for any $m\ge 1$. The limits on $[-m,m],\forall m\ge 1$, coincide and are denoted by $G^{*}$. Then $G^{*}\in\mathcal{X}$ and $\mathcal{Y}$ is a compact operator from $\mathcal{R}_{G}$ to $\mathcal{X}$.

We then prove the continuity of $\mathcal{Y}$. Let $G^{(k)}\rightarrow G^{*}$ be a converging sequence in $\mathcal{R}_{G}$, i.e. $\Vert G^{(k)}-G^{*}\Vert_{C^{1+\alpha}([-m,m])}\rightarrow 0$ for any $m\ge 1$. Define $g^{(k)}:=\mathcal{Y}_{g}G^{(k)}$, $g^{*}:=\mathcal{Y}_{g}G^{*}$.

Let $G^{(k),\epsilon,n}$ and $G^{*,\epsilon,n}$ be the approximate function in the penalty approximation, then 
\begin{align*}
&\Vert G^{(k)}(x)-G^{*}(x)\Vert_{W^{2}_{p}([-m,m])}\\
\le& C\bigg\{\Vert G^{(k)}(x)-G^{(k),\epsilon,n}(x)\Vert_{W^{2}_{p}([-m,m])}+\Vert G^{*}(x)-G^{*,\epsilon,n}(x)\Vert_{W^{2}_{p}([-m,m])}+\Vert H^{(k)}(x)-H^{*}(x)\Vert_{L^{p}([-m,m])}\\
&+\Vert \alpha^{\epsilon,n}(\Phi(x)+G^{(k),\epsilon,n}(x))-\alpha^{\epsilon,n}(\Phi(x)+G^{*,\epsilon,n}(x))\Vert_{L^{p}([-m,m])}\bigg\},
\end{align*}
where $H^{(k)}(x):=(\mathcal{H}g^{(k)})(x,z), H^{*}(x):=(\mathcal{H}g^{*})(x,z)$.

Letting $\epsilon\rightarrow0$ and $n\rightarrow\infty$, we obtain 
\begin{equation*}
\Vert G^{(k)}(x)-G^{*}(x)\Vert_{W^{2}_{p}([-m,m])}
\le C\Vert H^{(k)}(x)-H^{*}(x)\Vert_{L^{p}([-m,m])}.
\end{equation*}

For a fixed $x$, define $\Gamma^{(k)}_{L}:=\sup\{x'\le x|x'\notin \mathcal{W}^{G^{(k)}}\}\in\mathbb{R}\bigcup\{-\infty\}$, $\Gamma^{(k)}_{R}:=\inf\{x'\ge x|x'\notin \mathcal{W}^{G^{(k)}}\}$, $\Gamma^{*}_{L}:=\sup\{x'\le x|x'\notin \mathcal{W}^{G^{*}}\}\in\mathbb{R}\bigcup\{-\infty\}$, $\Gamma^{*}_{R}:=\inf\{x'\ge x|x'\notin \mathcal{W}^{G^{*}}\}$, then $H^{(k)}$ and $H^{*}$ satisfies the same PDE on $[\Gamma^{(k)}_{L}\vee\Gamma^{*}_{L},\Gamma^{(k)}_{R}\vee\Gamma^{*}_{R}]$ with $\lim\limits_{k\rightarrow\infty}\Gamma^{(k)}_{L}=\Gamma^{*}_{L}$ and $\lim\limits_{k\rightarrow\infty}\Gamma^{(k)}_{R}=\Gamma^{*}_{R}$. For any $\epsilon>0$, for $k$ sufficiently large, the region $\big\{x\big||H^{(k)}(x)-H^{*}(x)|\ge\epsilon\big\}$ is a subset of $\bigcup(\Gamma^{(k)}_{L}\vee\Gamma^{*}_{L},\Gamma^{(k)}_{R}\wedge\Gamma^{*}_{R})$ where the union of open intervals are countable. Applying the maximum principle in each $(\Gamma^{(k)}_{L}\vee\Gamma^{*}_{L},\Gamma^{(k)}_{R}\vee\Gamma^{*}_{R})$, we obtain that $|H^{(k)}(x)-H^{*}(x)|=\epsilon$ on $\big\{x\big||H^{(k)}(x)-H^{*}(x)|\ge\epsilon\big\}$, it then holds that
\begin{equation*}
|H^{(k)}(x)-H^{*}(x)|\le\epsilon,\quad\forall x\in\mathbb{R}.
\end{equation*}
As a result, for any $\epsilon>0$, there exists $k$ sufficiently large such that
\begin{equation*}
\Vert G^{(k)}(x)-G^{*}(x)\Vert_{1+\alpha}\le C\sum\limits_{m=1}^{\infty}\frac{1}{\kappa^{m}}\Vert H^{(k)}(x)-H^{*}(x)\Vert_{L^{p}([-m,m])}\le C\sum\limits_{m=1}^{\infty}\frac{1}{\kappa^{m}}\Vert\epsilon\Vert_{L^{p}([-m,m])}\le C\epsilon,
\end{equation*}
which yields the continuity of $\mathcal{Y}$.

Applying the Schauder's fixed point theorem, we obtain that $\mathcal{Y}$ has a fixed point in $\mathcal{R}_{G}$.

\end{proof}

\begin{corollary}
There exists an equilibrium auxiliary singular control law $\hat{\Upsilon}$.
\end{corollary}
\begin{proof}
For any $z\in[0,1]$, let $G$ be a fixed point of $\mathcal{Y}$ in $\mathcal{R}_{G}$ and $g:=\mathcal{Y}_{g}G$, then $G\in C^{1+\alpha}(\mathbb{R})\bigcap C^{2+\alpha}(\mathbb{R}\setminus\partial\mathcal{W}^{G})$ and $g^{p,s}\in C(\mathbb{R}\times[s,+\infty))\bigcap C^{2+\alpha}((\mathbb{R}\setminus\partial\mathcal{W}^{G})\times[s,+\infty))$ solves (\ref{G})$\sim$(\ref{gp}). Define 
\begin{align*}
&V(x,t,z):=-\frac{\lambda}{\beta}\mathcal{E}(1)-\int_{z}^{1}G(x,z')dz',\\
&f(x,t,z,p,s):=-\frac{\lambda}{\beta}e^{-\beta(t-s)}\mathcal{E}(1)-\int_{z}^{1}g(x,t,z',p,s)dz'.
\end{align*}
Then $V$ and $f$ satisfy the regularity condition (a) and the extended HJB system (b) in Theorem \ref{verif-outer}. The admissibility condition (c) obviously holds. The integrability and transversality conditions (d) and (e) can be easily verified by using the explicit probability representation (\ref{present_g}) and (\ref{present_G_opt}). Invoking Theorem \ref{verif-outer}, we obtain the existence of equilibrium in the outer problem.
\end{proof}

\ \\
\textbf{Acknowledgements}:  Zongxia  Liang is supported by the National Natural Science Foundation of China under grant no. 12271290. Xiang Yu is supported by the Hong Kong RGC General Research Fund (GRF) under grant no. 15211524, the Hong Kong Polytechnic University research grant under no. P0045654 and the Research Centre for Quantitative Finance at the Hong Kong Polytechnic University under grant no. P0042708.

\bibliographystyle{abbrvnat}
{\small
\bibliography{references}}

@article{liang2025reinforcement,
      title={A Reinforcement Learning Framework for Some Singular Stochastic Control Problems}, 
      author={Zongxia Liang and Xiaodong Luo and Xiang Yu},
      year={2025},
      journal={Preprint, available at https://arxiv.org/abs/2506.22203}, 
}

@article{dong2024randomized,
  title={Randomized optimal stopping problem in continuous time and reinforcement learning algorithm},
  author={Dong, Yuchao},
  journal={SIAM Journal on Control and Optimization},
  volume={62},
  number={3},
  pages={1590--1614},
  year={2024},
  publisher={SIAM}
}

@article{dianetti2024exploratory,
      title={Exploratory Optimal Stopping: A Singular Control Formulation}, 
      author={Jodi Dianetti and Giorgio Ferrari and Renyuan Xu},
      year={2024},
      journal={Preprint, available at https://arxiv.org/abs/2408.09335}, 
}

@article{dianetti2025entropy,
      title={Entropy Regularization in Mean-Field Games of Optimal Stopping}, 
      author={Jodi Dianetti and Roxana Dumitrescu and Giorgio Ferrari and Renyuan Xu},
      year={2025},
      journal={Preprint, available at https://arxiv.org/abs/2509.18821}, 
}

@article{wang2020reinforcement,
  title={Reinforcement learning in continuous time and space: A stochastic control approach},
  author={Wang, Haoran and Zariphopoulou, Thaleia and Zhou, Xun Yu},
  journal={Journal of Machine Learning Research},
  volume={21},
  number={198},
  pages={1--34},
  year={2020}
}

@article{wang2020continuous,
  title={Continuous-time mean--variance portfolio selection: A reinforcement learning framework},
  author={Wang, Haoran and Zhou, Xun Yu},
  journal={Mathematical Finance},
  volume={30},
  number={4},
  pages={1273--1308},
  year={2020},
  publisher={Wiley Online Library}
}

@article{jia2022policy,
  title={Policy evaluation and temporal-difference learning in continuous time and space: A martingale approach},
  author={Jia, Yanwei and Zhou, Xun Yu},
  journal={Journal of Machine Learning Research},
  volume={23},
  number={154},
  pages={1--55},
  year={2022}
}

@article{jia2022policy1,
  title={Policy gradient and actor-critic learning in continuous time and space: Theory and algorithms},
  author={Jia, Yanwei and Zhou, Xun Yu},
  journal={Journal of Machine Learning Research},
  volume={23},
  number={275},
  pages={1--50},
  year={2022}
}

@article{jia2023q,
  title={q-Learning in continuous time},
  author={Jia, Yanwei and Zhou, Xun Yu},
  journal={Journal of Machine Learning Research},
  volume={24},
  number={161},
  pages={1--61},
  year={2023}
}

@article{WeiYu,
  title={Continuous-time q-learning for mean-field control problems},
  author={Wei, Xiaoli and Yu, Xiang},
  journal={Applied Mathematics $\&$ Optimization},
  volume={91},
year={2025},
  pages={10},
}

@article{BoHuangYu,
  title={On optimal tracking portfolio in incomplete markets: The reinforcement learning approach},
  author={Bo, Lijun and Huang, Yijie and Yu, Xiang},
  journal={SIAM Journal on Control and Optimization},
  volume={63},
  number={1},
year={2025},
pages={321-348},
}

@article{DaiDJZ,
  title={Learning merton’s strategies in an incomplete market: Recursive
entropy regularization and biased gaussian exploration},
  author={Dai, Min and Dong, Yuchao and Jia, Yanwei and Zhou, Xun Yu},
  journal={Preprint, available at arXiv:2312.11797},
year={2023}, 
}

@article{Jia24,
  title={Continuous-time risk-sensitive reinforcement learning via quadratic variation penalty},
  author={Jia, Yanwei},
  journal={Preprint, available at https://ssrn.com/abstract=4800185},
year={2024}, 
}

@article{dai2024learning,
  title={Learning to optimally stop diffusion processes, with financial applications},
  author={Dai, Min and Sun, Yu and Xu, Zuo Quan and Zhou, Xun Yu},
  journal={Management Science},
  year={2026},
}

@article{karatzas1984connections,
  title={Connections between optimal stopping and singular stochastic control I. Monotone follower problems},
  author={Karatzas, Ioannis and Shreve, Steven E},
  journal={SIAM Journal on Control and Optimization},
  volume={22},
  number={6},
  pages={856--877},
  year={1984},
  publisher={SIAM}
}

@article{karatzas1985connections,
  title={Connections between optimal stopping and singular stochastic control II. Reflected follower problems},
  author={Karatzas, Ioannis and Shreve, Steven E},
  journal={SIAM Journal on Control and Optimization},
  volume={23},
  number={3},
  pages={433--451},
  year={1985},
  publisher={SIAM}
}

@article{karoui1991new,
  title={A new approach to the Skorohod problem, and its applications},
  author={Karoui, Nicole El and Karatzas, Ioannis},
  journal={Stochastics and Stochastic Reports},
  volume={34},
  number={1-2},
  pages={57--82},
  year={1991},
  publisher={Taylor \& Francis}
}

@article{liang2024equilibria,
  title={Equilibria for time-inconsistent singular control problems},
  author={Liang, Zongxia and Luo, Xiaodong and Yuan, Fengyi},
  journal={SIAM Journal on Control and Optimization},
  volume={62},
  number={6},
  pages={3213--3238},
  year={2024},
  publisher={SIAM}
}

@article{liang2025stackelberg,
  title={Stackelberg reinsurance and premium decisions with MV criterion and irreversibility},
  author={Liang, Zongxia and Luo, Xiaodong},
  journal={SIAM Journal on Financial Mathematics},
  volume={16},
  number={1},
  pages={167--199},
  year={2025},
  publisher={SIAM}
}

@article{dai2024dynamic,
  title={Dynamic Mean-Variance Portfolio Selection with Transaction Costs},
  author={Dai, Min and Jia, Yanwei and Jin, Hanqing},
  journal={Preprint, available at https://ssrn.com/abstract=4958481},
  year={2024}
}

@article{cao2025equilibrium,
  title={Equilibrium Strategies for Singular Dividend Control Problems under the Mean-Variance Criterion},
  author={Cao, Jingyi and Li, Dongchen and Young, Virginia R and Zou, Bin},
  journal={Preprint, available at https://arxiv.org/abs/2511.08433},
  year={2025}
}

@article{dai2023learning,
  title={Learning equilibrium mean-variance strategy},
  author={Dai, Min and Dong, Yuchao and Jia, Yanwei},
  journal={Mathematical Finance},
  volume={33},
  number={4},
  pages={1166--1212},
  year={2023},
  publisher={Wiley Online Library}
}

@article{wgl,
  title={Reinforcement learning for continuous-time optimal execution: Actor-critic algorithm and error analysis},
  author={Wang, B. and Gao, X. and Li, L.},
  journal={Finance and Stochastics},
  volume={forthcoming},
  year={2025},
}

@article{wyy2024,
  title={Unified continuous-time q-learning for mean-field game and mean-field control problems},
  author={Wei, Xiaoli and Yu, Xiang and Yuan, Fengyi},
  journal={Preprint, available at https://arxiv.org/abs/2407.04521},
  year={2024}
}

@article{HLYZ2025,
  title={Continuous-time reinforcement learning for optimal switching over multiple regimes},
  author={Huang, Yijie and Li, Mengge and Yu, Xiang and Zhou, Zhou},
  journal={Preprint, available at https://arxiv.org/abs/2512.04697},
  year={2025}
}
\end{document}